\newtheorem{theorem}{Theorem}[section]
\newtheorem{teo}[theorem]{Theorem}
\newtheorem{lem}[theorem]{Lemma}
\newtheorem{prop}[theorem]{Proposition}
\theoremstyle{definition}
\newtheorem{defi}[theorem]{Definition}
\newtheorem{notation}[theorem]{Notation}
\newtheorem{ex}[theorem]{Example}
\newtheorem{rem}[theorem]{Remark}
\DeclareMathOperator{\Ker}{Ker}
\DeclareMathOperator{\id}{id}
\DeclareMathOperator{\Id}{Id}
\DeclareMathOperator{\End}{End}
\DeclareMathOperator{\GL}{GL} 
\DeclareMathOperator{\SL}{SL} 
\DeclareMathOperator{\Sp}{Sp}
\DeclareMathOperator{\SO}{SO}
\DeclareMathOperator{\uend}{\underline{end}}
\DeclareMathOperator{\ev}{ev}
\newcommand{\ot}{\otimes}
\newcommand{\wt}{\widetilde}
\newcommand{\wh}{\widehat}
\def\M{\mathcal M}
\def\SS{\mathcal S}
\def\g{\mathfrak g}
\def\h{\mathfrak h}
\def\b{\mathfrak b}
\def\B{\mathfrak B}
\def\J{\mathfrak J}
\def\I{\iota}
\def\II{\mathcal I}
\def\C{\mathbb C}
\def\Z{\mathbb Z}
\def\N{\mathbb N}
\def\O{\mathcal O}
\def\om{\omega}
\def\To{\Rightarrow}
\def\tr{\mathrm{tr}}
\def\eps{\varepsilon}
\def\Om{\Omega}
\def\Bmanin{B}
\title{Universal quantum (semi)groups
and Hopf envelopes}
\author{Marco Andr\'es Farinati
\thanks{Dpto de Matem\'atica FCEyN UBA - IMAS (Conicet). 
e-mail: mfarinat@dm.uba.ar.
Partially supported by 
UBACyT 2018-2021
``K-teor\'ia y bi\'algebras en \'algebra, geometr\'ia y topolog\'ia''
and
PICT 2018-00858 ``Aspectos algebraicos y anal\'iticos de grupos
 cu\'anticos''.}}
\date{}
\begin{document}
\maketitle

\begin{abstract}
We prove that, in case $A(c)$ = the FRT construction of 
a braided vector space $(V,c)$ admits
a weakly Frobenius algebra $\B$ (e.g. if the braiding 
is rigid and its Nichols algebra is finite dimensional), then
the Hopf envelope of $A(c)$ is simply the localization
of $A(c)$ by a single element called the quantum determinant 
associated with the weakly Frobenius algebra. This generalizes
a result of the author together
with Gast\'on A. Garc\'ia
in \cite{FG}, where the same statement was proved,
 but with extra hypotheses that we now know 
were unnecessary.
On the way, we
describe a universal way of constructing a universal bialgebra
 attached
to  a finite dimensional vector space together with some
algebraic structure given by a family of maps $\{f_i:V^{\ot n_i}
\to V^{\ot m_i}\}$.
The Dubois-Violette and Launer Hopf algebra and the
co-quasi triangular property of  the FRT construction
play a fundamental role on the proof. 
\end{abstract}

\section*{Introduction}

Given $c:V\ot V\to V\ot V$ a solution of the braid equation, or
equivalently, $R:V\ot V\to V\ot V$ a solution of the 
Yang-Baxter equation, the FRT  construction
(Faddeev-Reshetikhin-Takhtajan) produces a 
coquasitraingular bialgebra
 $A(c)$, so  that its comodule category is naturally braided,
$V$ is  a comodule over $A(c)$, and the map $c$ is recovered
as the categorical braiding. The FRT construction gives
a standard way of constructing quantum semigroups, it is a
 bialgebra that is
never a Hopf algebra (unless the trivial case $V=0$), and the problem
of getting a Hopf algebra by inverting a quantum determinant
 is a classical one, for instance, this problem
is present in Manin's work \cite{M}. In \cite{FG} we give a partial answer,
motivated by the theory of finite dimensional Nichols algebras,
we could exhibit very explicit examples generalizing quantum 
grassmannian algebras and other similar approaches to 
quantum determinants.
The adjective ``explicit'' in \cite{FG} is double: we 
give explicit
formulas for the quantum determinant and explicit formulas
for the antipode.
 However, the main results in \cite{FG} has
hypothesis of two kind: the first main result has a theoretical
assumption, that we know is not always satisfied, but that lead to 
a general statement, and the second main result has an {\em ad hoc}
hypothesis, that is easy to check -from a computational point of view-
in concrete examples, but we couldn't give a general statement where
that hypothesis holds.  This situation is solved in this paper.

We emphasis that in \cite{FG} we give a  framework that generalizes various previous situations, such as
quantum grassmanian algebras (qga) or Frobenius quantum spaces (Fqs) introduced by Manin \cite{M,M2},
 the quantum determinants constructed by Hayashi \cite{H} for  multiparametric quantum deformations of
$\O(\SL_n)$, $\O(\GL_n)$, $\O(\SO_n)$, $\O(O_n)$ and $\O(\Sp_{2n})$, the quantum exterior algebras (qea) in the work of
Fiore \cite{F} for $\SO_q(N)$, $Oq(N)$, and $S_{pq}(N)$. Also,  qea's
 appear in thework of Etingof, Schedler and Soloviev \cite{ESS}.
All these qga’s Fqs and qea's, defined and considered above are quadratic algebras, and in \cite{FG} we
give examples admitting no quadratic qga's Fqs's nor qea's, but still there might 
be a finite-dimensional Nichols algebras associated with it,
hence quantum determinants, and calculation of Hopf envelopes.

It is convenient to see the FRT construction 
 as a bialgebra satisfying a universal
 property, and by doing that, by the same price, one can define
universal bialgebras for many natural and interesting situations, 
including the Hopf algebra introduced by Dubois-Violette and Launer,
and many others.
This construction is so natural that can be considered as folklore,
it is  almost present in Dubois-Violett and Launer's original
 work, but in the author's opinion, there is no
explicit description in the literature, and since this universal
point of view will be used intensively, we present it as the first section
 of this work. We notice that a recent work \cite{Om}
deals with the same universal object,
 focused in the existence problem, but in our context,
the existence problem is trivial and
also our presentation is different and very explicit so decide to keep
that section. Also, in \cite{HWWW} a similar situation is considered, but in
the weak-bialgebra context.

The paper is organized as follows: in {\bf  Section 1}
we define the universal bialgebra associated with a map 
$f:V^{\ot n_1}\to V^{\ot n_2}$ where $V$ is a finite dimensional
vector space. Obvious generalization for families of maps is also presented.

In {\bf  Section 2} we recall the Hopf envelope of a bialgebra and
study in detail the case of the Hopf algebra associated with a
non-degenerate bilinear form. 

In {\bf  Section 3} we
 recall the main construction in \cite{FG}:  the definition
of  Weakly Graded Frobenius Algebra (WGFA) 
and the corresponding candidate
for the explicit formula of the antipode.
After that, using  the coquasitriangular property of the FRT 
construction and the universal construction developed in Section
1  for a modification of the
the Dubois-Violete and Launer's Hopf algebra, the main
result (Theorem \ref{main}) is proved:

\

{\bf Theorem:}  {\em If the FRT construction
admits a Weakly Graded Frobenius Algebra (WGFA, see Definition
\ref{def:finite-Nichols-type}), then its Hopf envelope
is the localization with respect to a single element,
 the quantum determinant associated with the WGFA.}

\

In {\bf  Section 4} we briefly comment other applications
of the universal idea of the first section, and comparaison
with other  works.


{\bf Acknowledgements:} I wish to thank Gast\'on A. Garc\'ia
for fruitful discussions on the developments of this work.
Research Partially supported by the projects
UBACyT 2018-2021
``K-teor\'ia y bi\'algebras en \'algebra, geometr\'ia y topolog\'ia''
and
PICT 2018-00858 ``Aspectos algebraicos y anal\'iticos de grupos
 cu\'anticos''.

\section{Universal construction}
Let $k$ be a field, $V$ a finite dimensional vector space of dimension $n$, let
 $n_1,n_2\in\N_0$, and
\[
f:V^{\ot n_1}\to V^{\ot n_2}
\]
 a linear map. Fix $\{x_i\}_{i=1}^n$ a basis of $V$ and consider
$C$ the coalgebra with basis $\{t_i^j\}_{i,j=1}^n$ and comultiplication
\[
\Delta(t_i^j)=\sum_{k=1}^nt_i^k\ot t_k^j
\]
We consider $V$ as $C$-comodule via
\[
\rho(x_i)=\sum_jt_i^j\ot x_j
\]
Denote $TC$ the tensor algebra on $C$, with bialgebra structure extending
the comultiplication of $C$. 
Since $V$ is a $C$-comodule, then it is a $TC$-comodule, but because $TC$ is a bialgebra,
it follows that $V^{\ot \ell}$ is a $TC$-comodule
for any $\ell\in\N_0$. The structure map is as follows:

For multi-indices $I,J\in\{1,2,\dots,n\}^\ell$ (i.e. $I=(i_1,i_2,\cdots,i_\ell)$)
denote
\[
x_I:=x_{i_1}\ot x_{i_2}\ot\cdots \ot x_{i_\ell}\in V^{\ot \ell}
\]
\[
t_I^J:=t_{i_1}^{j_1}t_{i_2}^{j_2}\cdots t_{i_\ell}^{j_\ell}\in TC
\]
In this notation, the $TC$-comodule structure of $V^{\ot \ell}$
 is given by
\[
\rho(x_I)=\sum_{J\in\{1,\dots,n\}^\ell}t_I^J\ot x_J
\]
If $f(x_I)=\sum_{J}f_I^Jx_J$, it need not be $TC$-colinear,
the condition 
$\rho(f(x_I))=(\id\ot f) \rho(x_I)$ is precisely the commutativity of the following diagram
\[
\xymatrix@-1ex{
x_I\ar@{|->}[d]\ar@/^3ex/@{|->}[rrr]&V^{\ot n_1}\ar[r]^f\ar[d]^{\rho}& 
V^{\ot n_2}\ar[d]^{\rho}&\sum_{J}f_I^Jx_J\ar@{|->}[d]\\
\sum_{J}t_I^J\ot x_J
\ar@/_3ex/@{|->}[drrr]
&TC\ot V^{\ot n_1}\ar[r]^{\id\ot f}& TC\ot V^{\ot n_2}&
\sum_{J,K}f_I^Jt_J^K\ot x_K \\
&&&\sum_{J,K}t_I^Jf_J^K\ot x_K
}
\]
That is, $f$ is colinear if and only if
\[
\sum_{J,K}t_I^Jf_J^K\ot x_K
=\sum_{J,K}f_I^Jt_J^K\ot x_K
\hskip 1cm(\forall I,K)\]
So, we define the two-sided ideal $\II_ f:=\big\langle
\sum_{J}(t_I^Jf_J^K-f_I^Jt_J^K ) : \ \forall I,K\big\rangle$
and the algebra
\[
A(f):=TC/\II_f
\]

\begin{teo}
$A(f)$ is a bialgebra.
\end{teo}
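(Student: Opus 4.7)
The plan is to show that $\II_f$ is a biideal (i.e.\ both a two-sided ideal \emph{and} a coideal) of $TC$; the bialgebra structure on $A(f)=TC/\II_f$ will then descend automatically from the one on $TC$. By construction $\II_f$ is a two-sided ideal, so what remains is to verify $\varepsilon(\II_f)=0$ and $\Delta(\II_f)\subseteq \II_f\ot TC+TC\ot\II_f$, and for these it suffices to test on the generators
\[
r_I^K:=\sum_J\big(t_I^J f_J^K-f_I^J t_J^K\big).
\]

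The counit condition should be immediate: extending $\varepsilon(t_i^j)=\delta_i^j$ multiplicatively gives $\varepsilon(t_I^J)=\delta_I^J$, whence $\varepsilon(r_I^K)=f_I^K-f_I^K=0$.

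For the comultiplication condition, I would set $a_I^K:=\sum_J t_I^J f_J^K$ and $b_I^K:=\sum_J f_I^J t_J^K$, so that $r_I^K=a_I^K-b_I^K$. Using $\Delta(t_I^J)=\sum_L t_I^L\ot t_L^J$ together with the fact that the $f_J^K$ are scalars (hence slide freely across $\ot$ and can be absorbed into either tensor factor), a direct computation yields
\[
\Delta(a_I^K)=\sum_L t_I^L\ot a_L^K,\qquad \Delta(b_I^K)=\sum_L b_I^L\ot t_L^K.
\]
The key step is a telescoping: I would subtract and add $\sum_L t_I^L\ot b_L^K$, and use the identity
\[
\sum_L t_I^L\ot b_L^K \;=\; \sum_{L,M} t_I^L f_L^M\ot t_M^K \;=\; \sum_M a_I^M\ot t_M^K,
\]
obtained by sliding the scalar $f_L^M$ across the tensor. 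Rearranging gives
\[
\Delta(r_I^K)=\sum_L t_I^L\ot r_L^K+\sum_M r_I^M\ot t_M^K\;\in\; TC\ot\II_f+\II_f\ot TC,
\]
which is the desired coideal property.

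I expect no serious obstacle; the only mild subtlety is that in the two summands the multi-index $L$ ranges over strings of different lengths ($n_1$ and $n_2$ respectively, since $a_I^K$ and $b_I^K$ live in distinct homogeneous components of $TC$ when $n_1\neq n_2$), and that the scalars $f_J^K\in k$ commute freely with $\ot$---both are harmless bookkeeping. Conceptually, the whole statement is the shadow of the obvious universal property: $TC$ is the free bialgebra coacting on $V$, and $\II_f$ is precisely the ideal imposing colinearity of $f$, so there is no reason the result could fail---only the verification on generators above is needed to certify it.
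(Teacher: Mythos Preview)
Your proof is correct and follows essentially the same approach as the paper: both verify $\epsilon(r_I^K)=0$ directly, and both establish $\Delta(r_I^K)=\sum_L t_I^L\ot r_L^K+\sum_M r_I^M\ot t_M^K$ by the same add-and-subtract telescoping (the paper inserts and removes $\sum_{J,L} t_I^L f_L^J\ot t_J^K$, which is exactly your identity $\sum_L t_I^L\ot b_L^K=\sum_M a_I^M\ot t_M^K$). Your remark about the differing lengths of the multi-indices is a point the paper leaves implicit.
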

More precisely,  $\II_f\subseteq\Ker(\epsilon)$, where
$\epsilon:TC\to k$ is the algebra map determined by
 $\delta(t_i^j)=\delta_i^j$,
 and
$\Delta\II_f\subseteq
\II_f\ot TC+TC\ot \II_f$. In other words, $\II_f$ is a bi-ideal.

\begin{proof}
Recall, for  $t_I^J=t_{i_1}^{j_1}\cdots t_{i_a}^{j_a}$, the comultiplication is given by
\[
\Delta(t_I^J)=\Delta(t_{i_1}^{j_1})\cdots \Delta(t_{i_a}^{j_a})
\]
\[
=(\sum_{\ell_1}t_{i_1}^{\ell_1}\ot t_{\ell_1}^{j_1})\cdots 
(\sum_{\ell_a}t_{i_a}^{\ell_a}\ot t_{\ell_a}^{j_a})
=\sum_{\ell_1,\dots,\ell_a}
t_{i_1}^{\ell_1}\cdots t_{i_a}^{\ell_a}\ot 
 t_{\ell_1}^{j_1}\cdots t_{\ell_a}^{j_a}
=\sum_{L\in\{1,\dots,n\}^a}t_I^L\ot t_L ^J
\]
So,
\[
\Delta\left(\sum_{J}(t_I^Jf_J^K-f_I^Jt_J^K )\right)
=
\sum_{J,L}
(t_I^L\ot t_L^Jf_J^K-f_I^Jt_J^L\ot t_L^K )
\]
\[=
\sum_{J,L}
(t_I^L\ot (t_L^Jf_J^K - f_L^Jt_J^K )
+t_I^L f_L^J\ot t_J^K 
-f_I^Jt_J^L\ot t_L^K )
\]
\[=
\sum_{J,L}
(t_I^L\ot (t_L^Jf_J^K - f_L^Jt_J^K )
+t_I^L f_L^J\ot t_J^K 
-f_I^Lt_L^J\ot t_J^K )
\]
\[=
\sum_{L}
t_I^L\ot ( \sum_Jt_L^Jf_J^K - f_L^Jt_J^K )
+\sum_J(\sum_L t_I^L f_L^J-f_I^Lt_L^J)\ot t_J^K 
\]
Also
\[
 \epsilon\left(
\sum_Jt_L^Jf_J^K - f_L^Jt_J^K \right)
=\sum_J\delta_L^Jf_J^K - f_L^J\delta_J^K 
=f_L^K - f_L^K=0
\]
We conclude that the ideal generated by
$ \sum_Jt_L^Jf_J^K - f_L^Jt_J^K $ is  a coideal, contained in $\Ker\epsilon$.
\end{proof}

\begin{rem}
{\bf Variation: families of maps.}
The above construction generalizes easily for families of maps.
If 
$F:=\{f_i:V^{n_i}\to V^{m_i}\}_{i\in I}$ is a family of maps, then $\II_F:=\sum_{i\in I}\II_{f_i}$ is a sum of bi-ideals, so, it is a bi-ideal and $A(F):=TC/\II_F$
is a bialgebra
\end{rem}

\subsection{Universal property}

\begin{prop}\label{propuniversal}
The construction $A(F)$ satisfies the following universal property: 
If $V$ is 
a comodule over a bialgebra $A$, with structure map
$\rho_A:V\to A\ot V$, and
 $F=\{f_i:V^{\ot n_i}\to V^{\ot m_i}\}_{i\i I}$ is a family of
$A$-colinear maps,
then there exists a unique bialgebra morphism $\pi:A(F)\to A$ such that
the $A$-comodule structure on $V$ is the one comming from $A(F)$ via $\pi$, that
is, the following diagram is commutative
\[
\xymatrix{
V\ar[r]^{\rho_A}\ar[rd]_{\rho}&A\ot V\\
&A(F)\ot V\ar@{..>}[u]_{\pi\ot \id_V}
}\]
\end{prop}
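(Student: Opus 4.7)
The strategy is to exploit the two-step construction of $A(F)$: first freely from the coalgebra $C$ (passing to $TC$), then by imposing the relations $\II_F$ that precisely encode colinearity. I will build $\pi$ correspondingly in two stages, and then note that uniqueness is automatic.

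Write $\rho_A(x_i)=\sum_j a_i^j\ot x_j$ for unique $a_i^j\in A$. The comodule axioms $(\Delta_A\ot\id)\rho_A=(\id\ot\rho_A)\rho_A$ and $(\epsilon_A\ot\id)\rho_A=\id$ translate into $\Delta_A(a_i^j)=\sum_k a_i^k\ot a_k^j$ and $\epsilon_A(a_i^j)=\delta_i^j$. Hence $t_i^j\mapsto a_i^j$ defines a coalgebra morphism $C\to A$. Since $TC$ is the free algebra on $C$, this extends uniquely to an algebra map $\tilde\pi:TC\to A$. Both $\Delta_A\circ\tilde\pi$ and $(\tilde\pi\ot\tilde\pi)\circ\Delta_{TC}$ are then algebra maps $TC\to A\ot A$ that agree on the generating coalgebra $C$ by the computation just made, so they agree everywhere, making $\tilde\pi$ a bialgebra map.

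To pass to the quotient I need $\tilde\pi(\II_F)=0$. For each $f=f_i\in F$, expanding the $A$-colinearity $\rho_A\circ f=(\id_A\ot f)\circ\rho_A$ in the fixed basis, exactly as in the diagram used to define $\II_f$ but with $A$ in place of $TC$, yields $\sum_J a_I^J f_J^K=\sum_J f_I^J a_J^K$ for all multi-indices $I,K$, where $a_I^J:=a_{i_1}^{j_1}\cdots a_{i_{n_i}}^{j_{n_i}}\in A$. This is precisely the image under $\tilde\pi$ of the generator $\sum_J(t_I^J f_J^K-f_I^J t_J^K)$ of $\II_f$. Hence $\tilde\pi$ factors through $A(F)$, yielding the bialgebra map $\pi:A(F)\to A$; the diagram in the statement commutes by the very definition of the $a_i^j$. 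Uniqueness is immediate: applying $(\pi\ot\id_V)\circ\rho=\rho_A$ to $x_i$ forces $\pi(t_i^j)=a_i^j$, and the $t_i^j$ generate $A(F)$ as an algebra.

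The whole argument is a structural unpacking of the universal properties already built into $TC$ and $\II_F$, so there is no serious obstacle. The only point to keep straight is matching the colinearity of each $f_i$ with the images of the generators of $\II_{f_i}$, and that matching was already performed explicitly in Section~1 when defining $\II_f$.
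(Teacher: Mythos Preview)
Your proof is correct and follows essentially the same approach as the paper: define $\pi$ on generators by $t_i^j\mapsto a_i^j$ and check well-definedness via the colinearity of the $f_i$. The paper's proof is a two-line sketch that simply asserts well-definedness; you have supplied the verifications (coalgebra axioms, bialgebra compatibility, annihilation of $\II_F$, uniqueness) that the paper leaves implicit.
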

\begin{proof}
Fix a basis $x_1,\dots x_n$ of $V$ then $\rho_A(x_i)=\sum_ja_i^j\ot x_j$
for a unique choice of elements $\{a_i^j\}_{i,j}$ in $A$. Define the map 
$\pi:A(F)\to A$ via $t_i^j\mapsto a_ i ^j$, since all $f_i$ are
 $A$-colinear it follows 
that $\pi$ is well defined.
\end{proof}

\begin{ex}\label{dg}
Let $V=k[x]/x^2$ considered as unital algebra. That is, we have two maps,
the multiplication and the unit:
\[
m:V\ot V\to V\]
\[
u:k=V^{\ot 0}\to V\]
Consider $\{1,x\}$ as a basis of  $k[x]/x^2=k\oplus kx$. If
\[
\rho(1)=a\ot 1+b\ot x\]
\[
\rho(x)=c\ot 1+d\ot x\]
the condition of $u$ being co-linear, since $\rho(1_k)=1\ot 1_k$
forces $a=1$ and $b=0$. Because
 $x^2=0$,
\[
(\id\ot m)(\rho(x\ot x))=c^2\ot m( 1\ot 1) + cd\ot m(1\ot x) + dc\ot m(x\ot 1)
+d^2\ot m(x\ot x) 
\]
\[
=c^2\ot 1 + (cd+dc)\ot x 
\]
we have in $A(m)$ the relations $c^2=0$ and $dc=-cd$. Notice the (co)matrix
comultiplication
\[
\Delta a=a\ot a+b\ot c,\ 
\Delta b=a\ot b+b\ot d \]
\[
\Delta c=c\ot a+d\ot c,\ 
\Delta d=c\ot b+d\ot d \]
with $a=1$ and $b=0$ gives
\[
\Delta c=c\ot 1+d\ot c,\ 
\Delta d=d\ot d \]
That is, $d$ is group-like and $c$ is a skew primitive. We conclude
\[
A=A(\{m,u\})=k\{c,d\}/\langle c^2, cd+dc\rangle=k[\N_0]\# k[c]/c^2
\]
A comodule
structure over $k[\N_0]\# k[c]/c^2$ is
precisely a d.g. structure, in this case, this construction gives
the natural grading $|1|=0$, $|x|=1$, together
with  the differential  $\partial x=1$, $\partial 1=0$.
Notice that -at least if $\frac12\in k$-, the abelianization
$A_{ab}:=A/([A,A])=k[d]=k[\N_ 0]$. In the ``classical'' setting
one gets only a natural grading on $k[x]/(x^2)$ (or a Torus action),
but in this non-commutative or ``quantum semigroup'' action, one gets
the differential structure due to the element $c$.
\end{ex}

\begin{rem} Usual classical objects that are invariant under the group of
 automorphisms do not
need to be automatically ``quantum invariant''. For example,
if $A$ is a finite dimensional algebra, one can consider the trace
map $\tr:A\to k$ given by 
\[
\tr (a) =\tr\Big( a'\mapsto a\cdot a'\Big)
\] 
In the above example, $\tr(1)=2$ and $\tr(x)=0$, but
\[
(\id\ot\tr)\rho(x)=
(\id\ot\tr)(c\ot 1+d\ot x)=2c \neq 0 = \rho(\tr x)\]
That is, $\tr$ {\em is not} $A(m,u)$-colinear.
Similarly, the Killing form of a finite dimensional Lie algebra $(\g,[,])$
is not necessarily colinear with respect to the algebra $A([,])$.
Nevertheless, one can always add the relation associated with that
operation, form instance, $\tr$ is always colinear with respect to
$A(m,u,\tr)$, and the Killing form will be colinear using $A([,],\kappa)$.
\end{rem}

\section{The Hopf algebra of Dubois-Violette and Launer}

In \cite{DVL}, the authors define a Hopf algebra associated with a
non-degenerate bilinear form in the following way:

Let  $b:V\ot V\to k$ be  a non degenerate bilinear map,
write

\[b(x_i\ot x_j)=b_{ij}\in k\]
and denote $b^{ij}$ the matrix coefficients of the inverse of the
matrix $(B)_{ ij}=b_{ij}$, and as
in the previous section, $\{x_i:i=1\dots,n\}$ is a basis of $V$.
 Dubois-Violette and Launer define
the $k$-algebra with generators $\{t_i^j: i,j=1,\dots,n\}$ and relations
(sum over repeated indices).

\begin{eqnarray}\label{eqdv1}
b_{\mu\nu}t_ \lambda^\mu t_\rho^\nu = b_{\lambda\rho}1
\\
\label{eqdv2}
b^{\mu\nu}t^ \lambda_ \mu t^\rho_ \nu = b^{\lambda\rho}1
\end{eqnarray}

In our setting,  $V$  is a comodule
over the free bialgebra with generators $t_i^j$ and $V^{\ot 2}$
is a comodule via (sum over repeated indices)
\[
\rho(x_i\ot x_j)=
t_i^kt_j^l\ot x_k\ot x_l,\]
Considering $k$ as trivial comodule,
 the colinearity of $b$ requires
\[
\rho(x_i\ot x_j)=1\ot b(x_i\ot x_j)
=1\ot b_{ij}\overset{?}{=}(\id\ot b)(\rho(x_i\ot x_j))=
t_i^kt_j^l \ot b_{kl}
\]
hence, the equations for $A(b)$ are
\begin{eqnarray}\label{eqdv3}
t_i^kt_j^l b_{kl}=b_{ij}
\end{eqnarray}
This is the same as equation \eqref{eqdv1}.

\

In terms of matrices, 
consider  $B\in M_n(k)\subset M_n(A(b)(c))$ and $\frak t\in M_n(A(b))$ given by
  $(B)_{ij}=b_{ij}$ and $(\frak t)_{ij}=t_i ^j$; the relation
\eqref{eqdv3} is:
\[
 \frak t\cdot B\cdot \frak t=B
\]
or equivalently
\[
B^{-1}\cdot  \frak t\cdot B\cdot \frak t=\Id
\]
because $B$ is invertible in $M_n(k)$, and so it is invertible in $M_n(A(b))$.
We see that $\frak t$ has a {\em left} inverse $B^{-1}\cdot\frak t\cdot B$,
we will show that $\frak t$ also have
 $B^{-1}\cdot\frak t\cdot B$ as {\em right} inverse. For that
denote
\[
U:=\frak t\cdot (B^{-1}\cdot \frak t\cdot B)
\]
We want to show that $U=\id$. We compute
$B^{-1}\frak t BU$ and get
\[
B^{-1}\frak t BU=B^{-1}\frak tB\frak t B^{-1} \frak t B
\]
and using $\frak t \cdot B\cdot \frak t =B$ we get
\[
B^{-1}(\frak tB\frak t) B^{-1} \frak t B
=B^{-1}B B^{-1} \frak t B
= B^{-1} \frak t B
\]
But from $B^{-1}\frak t BU= B^{-1}  \frak t B$ it follows
that
\[
\frak t BU=   \frak t B 
\]
and so
\[
B^{-1}\frak t B\frak t BU=   B^{-1}\frak t B\frak t B
\]
Recall $\frak t B \frak t = B$, so
$B^{-1}\frak t B\frak t=\id$ and we conclude from the above equation that
\[
BU= B
\]
which clearly implies $U=\Id$.

Notice that
 $U=\Id$ means
\[
\frak t\cdot B^{-1}\cdot \frak t\cdot B = \id
\]
or equivalently
\[
\frak t\cdot B^{-1}\cdot \frak t=B^{-1}
\]
and the components of this equation is precisely equation
 \eqref{eqdv2}, so,
 equation \eqref{eqdv2} is redundant
 and Dubois-Violette and Launer
Hopd algebra coincides with the bialgebra $A(b)$. In particular,
$A(b)$ is a Hopf algebra, the antipode is
given by
\[
S(t_i^j)=b^{jk}t_k^lb_{li}
\]
We  mention that the  equations
\[
S(h_1)h_2=\epsilon(h) 
\ \hbox{ and }\
h_1S(h_2)=\epsilon(h)
\]
for  $h=t_i^j$ (and for all $i,j$) mean, respectively,
 \[
(B^{-1}\frak t B) \frak t=\id
 \ \hbox{ and }\
\frak t (B^{-1}\frak t B) =\id
\]

The fact that Dubois-Violette and Launer construction gives a
 Hopf algebra is well-known, but for completeness we include
the following:

{\em Proof that the antipode is well-defined:}
Recall the relation
 $t_i^kt_j^lb_{kl}=b_{ij}$;
we want to see that the opposite relation is valid for $S(t_i ^j)$, that is
\[
S(t_j^l)S(t_i^k)b_{kl}\overset{?}{=}b_{ij}
\]
Let us denote
\[
\wt B_{ji}:=S(t_j^l)S(t_i^k)b_{kl}
=
b^{la}t_a ^cb_{cj}
b^{kd}t_d ^eb_{ei}b_{kl}
\]
\[=
\delta^{a}_kt_a ^cb_{cj}
b^{kd}t_d ^eb_{ei}
=
t_k ^cb_{cj}
b^{kd}t_d ^eb_{ei}
\]
We have
\[
\wt B_{ji}=
t_k ^cb_{cj}
b^{kd}t_d ^eb_{ei}
\]
So
\[
\wt B_{ji}t^i_u=
t_k ^cb_{cj}
b^{kd}t_d ^eb_{ei}t^i_ u
=
t_k ^cb_{cj}
b^{kd}(t_d ^eb_{ei}t^i_ u)\]
\[
=t_k ^cb_{cj}b^{kd}b_{du}\]
\[
=t_k ^cb_{cj}\delta_ u^k=t_u ^cb_{cj}=b_{ij}t^i_ u\]
In matrix notation, $\wt B\frak t=B^t\frak t$.
Since $\frak t$ is invertible, it follows that
$\wt B_{ji}=b_{ij}$ as desired.

\begin{rem}
Other situations where the universal bialgebra is already Hopf,
outside non-degenerate bilinear forms, are possible;
see for instance
\cite{BD} or \cite{CWW}. However, the
 Dubois-Violette and Launer's
Hopf algebra will be enough for our purpose.
\end{rem}

\subsection{The Hopf envelope}

It is well-known  that
the forgetful functor from Hopf algebras to bialgebras has a 
left adjoint, the general construction is due to Takeuchi
\cite{T}.
 In other words, if $B$ is a bialgebra, then there exists
a Hopf algebra $H(B)$ together with a bialgebra map $\I_B:B\to H(B)$
such that every map $f:B\to H$ from $B$ into a Hopf algebra 
$H$ factors in a unique way through $H(B)$:
\[
\xymatrix{
\ar[rd]_ {\I_B}B\ar[rr]^{\forall f}&&H\\
&H(B)\ar@{..>}[ru]_{\exists ! \wt f}
}\]
The general construction can be complicated: if $B$ is given by 
generators $b_1,\dots,b_m$ and relations, one should add
extra generators $b'_1,\cdots, b'_m$ so that
$S(b_i)=b'_i$, and relations in order to get the Antipode axiom,
but also one should add $S^2(b_i)=S(b'_i)$ as generator,
say $b''_i$, and so on. It is not clear in general
if the Hopf envelope
of a finitely generated bialgebra is finitely generated. In some cases,
very few elements are really necessary in order to get a Hopf algebra.
For example, if one knows a priori that $S^2=\id$ (e.g. if $B$ is commutative
or cocommutative), then the double of the original generators 
will be enough. A particularly simple example
is
 $B=O(M_n(k))$, whose Hopf envelope
is $\O(GL_n(k))=\O(M_n(k))[\det^{-1}]$. That is, we only add
a single commuting generator $D^{-1}$, with the relation 
$D^{-1}\cdot \det=\det\cdot  D^{-1}=1$.

In the framework of universal biagebra $A(F)$
associated with a family of maps $F$, one can also consider
$H(F):=H(A(F))$, that is, the Hopf envelope of $A(F)$. It will have the
 analogous universal property as $A(F)$ but within the Hopf algebras.
It is not clear how many generators are really necessary 
to add, but very different things can happen. From this point of view,
the FRT construction (see subsection \ref{sectionFRT}) is
an opposite example of the Dubois-Violette and Launer's Hopf
algebra:
The universal
bialgebra associated with a non degenerate bilinear form is already a Hopf
algebra, while the FRT construction
is never Hopf (unless the trivial case $V=0$).
 However,
the main result of this paper is to show a general circumstance
where the Hopf envelope of the FRT construction is the 
(in general non-commutative - though normal) localization
with respect to a single element, that we call the quantum determinant.

Notice that if $H$ is a Hopf algebra and  $I$ a bi-ideal such that 
$S(I)\subseteq I$, then clearly $H/I$ is a Hopf algebra.
This can be applied to the following situation:

\begin{teo}
Let $V$ be a finite dimensional
vector space and let us fix  a linear
isomorphism $\Phi:V\to V^{**}$ (not necessary the canonical one).
Consider the maps
\[
\ev_l:V^*\ot V\to k\]
\[
\varphi\ot x\mapsto \varphi(x)\]
\[
\ev_r:V\ot V^*\to k\]
\[
x\ot \varphi\mapsto \Phi(x)(\varphi)\]
If $W:=V\oplus V^*$, then the universal bialgebra
on $W$ such that the decomposition
 $W=V\oplus V^*$ and the bilinear maps
$\ev_l$ y $\ev_r$ are colinear, is already a  Hopf
algebra. We will denote it by
 $H(\ev_l,\ev_r)$.
\end{teo}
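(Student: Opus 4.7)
The plan is to realize $H(\ev_l,\ev_r)$ as a quotient of a Dubois--Violette--Launer Hopf algebra by a Hopf ideal, so that the existence of an antipode is automatic. First I would bundle $\ev_l$ and $\ev_r$ into a single bilinear form $b:W\ot W\to k$ on $W=V\oplus V^*$, defined by $b|_{V^*\ot V}:=\ev_l$, $b|_{V\ot V^*}:=\ev_r$, and $b|_{V\ot V}=b|_{V^*\ot V^*}=0$. Since $\ev_l$ is the canonical nondegenerate pairing and $\ev_r$ is nondegenerate thanks to the isomorphism $\Phi:V\to V^{**}$, the combined form $b$ is nondegenerate on $W$. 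The previous subsection then applies and endows $A(b)$ with a Hopf structure whose antipode is given by the explicit formula $S(t_i^j)=b^{jk}t_k^l b_{li}$.

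Next, I would identify $H(\ev_l,\ev_r)$ with the quotient $A(b)/J$, where $J$ is the two-sided ideal generated by those matrix coefficients $t_i^j$ whose row and column indices lie in different summands of $V\oplus V^*$. The subcomodule conditions for $V$ and $V^*$ in $W$ are exactly the vanishing of these ``off-diagonal block'' generators, and once they vanish the single relation ``$b$ is colinear'' splits into the two separate relations ``$\ev_l$ is colinear'' and ``$\ev_r$ is colinear'' (the two remaining blocks of $b$ vanish identically and impose nothing). A direct comparison of the two universal properties then yields $H(\ev_l,\ev_r)\cong A(b)/J$ as bialgebras.

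What remains, and is the main obstacle, is to verify $S(J)\subseteq J$ so that the Hopf structure descends through the quotient. In the ordered basis $\{x_1,\dots,x_n,\varphi^1,\dots,\varphi^n\}$ of $W$, both $B=(b_{ij})$ and $B^{-1}$ are block anti-diagonal: $B$ has upper-right block $\Phi$ and lower-left block $I$, while $B^{-1}$ has upper-right block $I$ and lower-left block $\Phi^{-1}$. Substituting this shape into the DVL formula $S(t_i^j)=b^{jk}t_k^l b_{li}$ and tracking the nonvanishing index choices, one finds that each of $B$ and $B^{-1}$ swaps the $V$- and $V^*$-components, so the double swap appearing in the antipode sends an off-diagonal generator $t_i^j$ to a linear combination of off-diagonal generators in the same block (either $V\to V^*$ or $V^*\to V$ separately). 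Since $S$ is anti-multiplicative and $J$ is two-sided, this bookkeeping check gives $S(J)\subseteq J$, and therefore $A(b)/J=H(\ev_l,\ev_r)$ inherits the Hopf algebra structure, proving the theorem.
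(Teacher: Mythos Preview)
Your proposal is correct and follows essentially the same route as the paper: bundle $\ev_l,\ev_r$ into a single nondegenerate bilinear form $b$ on $W$, invoke the Dubois--Violette--Launer result so that $A(b)$ is Hopf, and then check via the block anti-diagonal shape of $B$ and $B^{-1}$ that the ideal generated by the off-diagonal generators is $S$-stable. The paper's only cosmetic difference is that it encodes the decomposition condition as colinearity of the projector $\pi_V$ (so $H(\ev_l,\ev_r)=A(b,\pi_V)$) and writes out the index computation for $S(t_i^{n+j})$ and $S(t_{n+i}^j)$ explicitly, but the content is identical to your argument.
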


\begin{proof}
If $b:W\ot W\to k$ is the bilinear map determined
 by $b(v,w)=0=b(\phi,\psi)$,
$b(\phi,v)=\ev_l(\phi,v)$,
$b(v,\phi)=\ev_r(\phi,v)$, then  $b$ is 
clearly non-degenerate; its universal bialgebra is Hopf
because it coincides with Dubois-Violette and Launer's one.
Let us call it $H(b)$.
Let  $x_1,\dots, x_n$ be a basis of  $V$, $x^1,\dots,x^n$ 
its dual basis, that for covinience we will denote
$x_{ n+1},\dots,x_{2n}$.
Recall $H(b)$ has generators $t_i^j:i,j=1,\dots,2n$ and the antipode
is given by
\[
S(t_ i^j)=b^{jk}t_k^lb_{li} 
\]
Since $b(V,V)=b(V^*,V^*)=0$, the matrix $B\in k^{2n\times 2n}$
of $b$ has a structure of $2\times 2$ blocks of size
  $n\times n$ of the form
\[
B=
\begin{pmatrix}
0&*\\
*&0
\end{pmatrix}
\]
Similar block structure for its inverse.
One may write, for $i,j=1,\dots,n$
\[
S(t_ i^{n+j})=\sum_{k,l=1}^{2n}b^{n+j,k}t_k^lb_{li} 
=\sum_{k=1}^{n}\sum_{l=n+1}^n b^{n+j,k}t_k^lb_{li} 
=\sum_{k,l=1}^{n} b^{n+j,k}t_k^{n+l}b_{n+l,i} 
\]
and similarly
\[
S(t_ {n+i}^{j})=\sum_{k,l=1}^{2n}b^{j,k}t_k^lb_{l,n+i} 
=\sum_{k=n+1}^{2n}\sum_{l=1}^{n}b^{j,k}t_k^lb_{l,n+i} 
=\sum_{k,l=1}^{n}b^{j,n+k}t_{n+k}^lb_{l,n+i} 
\]
If we add the condition ``the decomposition $W=V\oplus V^*$
is  colinear'', this is the same as
require that the projector
\[
\pi_V:V\oplus V^*\to V\oplus V^*\]
\[x_i\mapsto x_i,\ \ (i=1,\dots,n)\]
\[
x_{n+i}\mapsto 0\ \ (i=1,\dots,n)\]
should be colinear. (Notice $\pi_{V^*}=\id_W-\pi_V$, so we don't need to add
the other projector to the family of maps).
We have, for
 $i=1,\cdots,n$:
\[
\rho(x_i)=\sum_{j=1}^{2n}t_i^j\ot x_j
=\sum_{j=1}^{n}t_i^j\ot x_j
+\sum_{j=1}^{n}t_i^{n+j}\ot x_{n+j}
\]
\[
\To \ (\id\ot\pi_V)(\rho(x_i))=\sum_{j=1}^{n}t_i^j\ot x_j
\]
\[
\rho(x_{n+i})=\sum_{j=1}^{2n}t_{n+i}^j\ot x_j
=\sum_{j=1}^{n}t_{n+i}^j\ot x_j
+\sum_{j=1}^{n}t_{n+i}^{n+j}\ot x_{n+j}
\]
\[
\To \ (\id\ot\pi_V)(\rho(x_{n+i}))=\sum_{j=1}^{n}t_{n+i}^j\ot x_j
\]
If one ask  $\pi_V$ to be colinear then the relations needed are
\[
t_i^{n+j}=0=t_{n+i}^j \ \ \forall i,j=1,\dots,n
\]
We see from the previous computation that the ideal generated by
$\{t_i^{n+j},t_{n+i}^j, i,j=1,\dots,n\}$ is stable by the antipode and so,
the quotient bialgebra 
\[
A(b,\pi_V)=H(b)/\langle t_i^{n+j}=0=t_{n+i}^j,
 i,j=1,\dots,n  \rangle
\]
 is a Hopf algebra.
Since $\pi_V:W\to W$  is $A(b,\pi_V)$ colinear, then
$\pi_{V^*}=\id_W-\pi_V$
is colinear too, hence,  $V\subset W$ and $V^*\subset W$
are subcomodules. Since $b$ is colinear, we  conclude
that  $\ev_l$ and $\ev_r$ are colinear maps too,
because they can be computed using $b$ and
restrictions from $W$ into subcomodules, that is,
using compositions with colinear inclusions.
\end{proof}

As a corollary, we can give a proof of Theorem \ref{main}, 
that is the same statement as in \cite{FG} but almost without hypothesis.

\section{Main resut}

After recalling the main objects of interest here: the FRT construction and
 Weakly Graded Frobenius Algebras, we prove our main result.
 
\subsection{FRT construction: $A(c)$ \label{sectionFRT}} 

A \textit{braided vector space} is a pair $(V,c)$, where $V$ is $k$-vector space and
$c\in \End(V\ot V)$ is a solution of the braid equation:
\begin{equation}\label{eq:braideq}
(c\ot \id)(\id\ot c)(c\ot \id)=
(\id\ot c)(c\ot \id)(\id\ot c)\qquad \text{ in }\End(V\ot V\ot V),
\end{equation}
In \cite{FRT}, the authors define a bialgebra associated with an $R$-matrix,
but to have an $R$-matrix is equivalent to have a braiding considering 
$c:=\tau\circ R$, where $\tau:V\ot V\to V\ot V$ is the usual flip.
In terms of the matrix coefficients of $c$, the 
$FRT$ construction is the $k$-algebra generated by
$t_i^j$ with relations
\begin{equation}\label{eq:FRT}
\sum_{k,\ell}c_{ij}^{k\ell}t_k^rt_ \ell^s=
\sum_{k,\ell}t_i^kt_ j^\ell c_{k\ell}^{rs}
\hskip 1cm \forall\ 1\leq i,j,r,s\leq n.
 \end{equation}
It turns out that the FRT construction is {\em exactly} $A(c)$.

The fact that $c$ is a solution of the braid equation
implies the very important fact that
$A(c)$ is a co-quasi-triangular
bialgebra. That is, there exists a convolution-invertible bilinear
 map
$r:A\times A\to  \Bbbk$ satisfying
\[
\begin{array}{crcl}
(CQT1)\qquad&r(ab,c)&=&r(a,c_{(1)})r(b,c_{(2)})
\\
(CQT2)\qquad&r(a,bc)&=&r(a_{(2)},b)r(a_{(1)},c)
\\
(CQT3)\qquad&r(a_{(1)},b_{(1)})a_{(2)}b_{(2)}&=&b_{(1)}a_{(1)}r(a_{(2)},b_{(2)})
\end{array}
\]
This map is uniquely determined by
\[
r( t_i^k,t_j^\ell)=
c_{ji}^{k\ell}\qquad \text{ for all }1\leq i,j,k,\ell\leq n.
\]
(Notice the indices $ij$ and $ji$ in the definition of $r$.)
In particular, the category of $A(c)$-comodules is braided.

\subsection{Weakly Graded Frobenius algebras}

In this subsection and the following we recall the main definitions and results of \cite{FG}.
We begin with the definition of weakly
graded Frobenius algebra, that
 extends the notion of Frobenius quantum space 
introduced by Manin in \cite[\S 8.1]{M2}. The motivation
is  to produce quantum determinants together with
 quantum Cramer-Lagrange identities, hence a
formula for the antipode.
The paradigmatic examples are finite dimensional Nichols algebras
 associated with rigid solutions of the braid equation.

\begin{defi}\cite[2.1]{FG}\label{def:finite-Nichols-type} 
Let $\mathcal{A}$ be a bialgebra and $V \in\ ^{\mathcal{A}}\M$.
An  algebra $\B$ is called a
{\bf weakly graded-Frobenius} (WGF) algebra for $\mathcal{A}$ and $V$ if the following conditions are satisfied:
\begin{itemize}
\item[WGF1)] $\B$ is an $\N$-graded $\mathcal{A}$-comodule algebra, that is
 $\B=\underset{n\geq 0}\bigoplus\B^n$, 
$\rho(\B^n)\subseteq \mathcal{A}\ot \B^n$, where
$\rho:\B\to  \mathcal{A}\ot \B$ is the structure map,
and $\B^n\cdot \B^m\subseteq \B^{n+m}$ for all $n,m\geq 0$;
\item[WGF2)]  $\B$ is connected (i.e. $\B^0=\Bbbk$) and $\B^1=V$ as $\mathcal{A}$-comodules;
\item[WGF3)] $\dim_\Bbbk\B<\infty$ and $\dim_\Bbbk\B^{top}=1$, where
$top=\max\{n\in\N : \B^n\neq 0\}$;
\item[WGF4)] the multiplication induces
non-degenerate bilinear maps
\[
\B^1\times\B^{top-1}\to \B^{top},\qquad
\B^{top-1}\times\B^1\to \B^{top}.
\]
\end{itemize}
\end{defi}
We notice that 
conditions in (WGF4) appeared in \cite{M2}, related to involutive
 solutions of the 
QYBE (thus the corresponding $c$ is a symmetry) and in \cite{Gu},
related to Hecke-type solutions. It is known that in both cases the
 quantum
exterior algebras are Nichols algebras, thus this Definition generalizes
 \cite{M2,Gu}.

\begin{defi} Let $\B$ be a WGF algebra for $A$ and write 
$\B^{top}=\Bbbk \b$ for some $0\neq \b\in \B$.  
We call such an element a \textit{volume element} for $\B$.
Since $\B^{top}$  is an $A$-subcomodule, $\rho(\b)= 
D\ot \b$ for some group-like element $D\in A$. 
We call this element $D$ {\em the quantum determinant in $A$ associated with $\B$}.
\end{defi}

\begin{notation}\label{not:Tij}
Let $\{x_1,\dots,x_n\}$ be a basis of $V$. 
Since by assumption  the multiplication $\B^1\times\B^{top-1}\to \B^{top}=\Bbbk \b$ is non-degenerate,
there exists a basis of $\B^ {top-1}$, say
$\{\om^1,\dots,\om^n\}\in\B^ {top-1}$, such that
\[
x_i\om^j=\delta_i ^j \b\quad \in \B^{top}.
\]
For $1\leq i,j\leq n$, we define the elements $T_i^j\in A$ by the equality
\[
\lambda(\om^i)=\sum_j T^i_j\ot \om^j \qquad \text{ for all }1\leq i \leq n.
\]
It is easy to check that  
$\Delta (T_j^i)=\sum_{k=1}^{n}T_k^i\ot T_j^k$ 
and $\eps(T_{i}^{j}) = \delta_{i}^{j}$ for all $1\leq i,j\leq n$.
\end{notation}

\begin{ex} 
If $V=k^n$, $c=-\tau$ on $V\ot V$, $A(c)=\O(M_n(k))$, $\B=\Lambda V$,
then  $\b=x_1\wedge\cdots x_n$ is the usual volume
form, the elements  
$w^j=(-1)^{i+1}x_1\wedge\cdots \wh {x_j}\cdots \wedge x_n$ give a "dual basis"
with respect to $\{x_1,\dots,x_n\}$. The elements
$T_i^j$ are the minors of the generic matrix. The bialgebra
 $\O(M_n(k))$
is not Hopf, but its localization
$\O(GL(n,k))=\O(M_n(k))[\det^{-1}]$ is a Hopf algebra.
\end{ex}

One of the main goals in \cite{FG} was to generalize
the Lagrange formula for expanding the determinant by rows, 
and hence to have a natural candidate for the antipode on the
 localization
by quantum determinants. The general statement is:

\begin{prop}\cite[Proposition 2.6]{FG}
\label{Lagrange}
The following formula holds in $A(c)$:
\begin{equation}\label{eq:A(c)-D}
\sum\limits_{k=1}^n t_i^kT^j_ k = \delta_ i^j D \qquad \text{ for all }1\leq i,j\leq n.
 \end{equation}
\end{prop}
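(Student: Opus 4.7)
The plan is to compute $\rho(x_i \omega^j) \in A(c) \otimes \B^{top}$ in two ways and compare. The whole identity is essentially forced by the compatibility of the coaction with the multiplication of $\B$ (condition WGF1), together with the fact that $\B^{top}$ is one-dimensional.

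First, I would exploit the defining relation $x_i \omega^j = \delta_i^j \b$ of the dual basis (Notation \ref{not:Tij}). Applying $\rho$ directly and using $\rho(\b) = D \otimes \b$ gives
\[
\rho(x_i \omega^j) = \delta_i^j \, \rho(\b) = \delta_i^j \, D \otimes \b.
\]
Next, since $\B$ is an $A(c)$-comodule algebra (WGF1), the coaction is multiplicative, so
\[
\rho(x_i \omega^j) = \rho(x_i)\, \rho(\omega^j) = \Bigl(\sum_k t_i^k \otimes x_k\Bigr)\Bigl(\sum_\ell T^j_\ell \otimes \omega^\ell\Bigr) = \sum_{k,\ell} t_i^k T^j_\ell \otimes x_k \omega^\ell.
\]
Reinserting $x_k \omega^\ell = \delta_k^\ell \b$ collapses the inner double sum to
\[
\rho(x_i \omega^j) = \Bigl(\sum_k t_i^k T^j_k\Bigr) \otimes \b.
\]

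Finally, equating the two expressions and using that $\b$ is a basis of the one-dimensional space $\B^{top}$ (WGF3) — equivalently, applying $\id \otimes \varphi$ for any linear functional $\varphi$ with $\varphi(\b)=1$ — one concludes $\sum_k t_i^k T^j_k = \delta_i^j D$ in $A(c)$, as desired.

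There is no real obstacle here: the identity is a direct consequence of the definitions and the comodule-algebra axiom, assuming one trusts that the $T^j_k$ are well defined (which follows from the existence of the dual basis, guaranteed by the non-degeneracy hypothesis WGF4). The only thing worth emphasizing is that the proof uses \emph{only} the left non-degeneracy of $\B^1 \times \B^{top-1} \to \B^{top}$; the symmetric right-sided formula $\sum_k T^j_k t_k^i = \delta_i^j D$ would be obtained by the same argument starting from a right dual basis, provided by the right non-degeneracy part of WGF4.
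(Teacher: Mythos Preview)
Your proof is correct and is exactly the natural argument: compute $\rho(x_i\omega^j)$ once via $x_i\omega^j=\delta_i^j\b$ and once via multiplicativity of the coaction, then compare coefficients of $\b$. The paper does not reproduce a proof here but cites \cite[Proposition~2.6]{FG}, and what you wrote is precisely that argument.

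One small caution about your closing remark. The ``symmetric right-sided formula'' obtained from a right dual basis $\{\omega'^j\}$ with $\omega'^j x_i=\delta_i^j\b$ would read $\sum_k {T'}^{\,j}_{k}\,t_i^{k}=\delta_i^j D$, where the ${T'}^{\,j}_{k}$ are the matrix coefficients of the coaction on the $\omega'^j$, not the same $T^j_k$ as before. In particular, this is \emph{not} the companion identity \eqref{delotrolado} needed for the antipode axiom on the other side; establishing that identity (with the original $T$'s) is exactly the nontrivial content of Theorem~\ref{main} and requires the coquasitriangular structure and the Dubois-Violette--Launer machinery, not merely the right non-degeneracy in WGF4. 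So your parenthetical overstates what the right dual basis alone buys.
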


We recall a result of Hayashi. 
\begin{lem}
\cite[Theorem 2.2]{H} Let $A$ be a coquasitriangular bialgebra.
For any group-like element $g\in A$, there is
a bialgebra automorphism $\mathfrak J_{g}:A\to A$ given by 
$
\J_{g}(a)=r(a_{(1)},g)a_{(2)}r^{-1}(a_{(3)},g)$
such that
\[ga=\J_{g}(a)g\qquad \text{ for all } a\in A.\]
\end{lem}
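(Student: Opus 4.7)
The plan is to establish first the intertwining relation $ga=\J_g(a)g$, then to deduce the bialgebra automorphism property from it and from the CQT axioms; invertibility will follow from an explicit inverse.

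For the intertwining, the key is that $g$ being group-like simplifies everything: $\Delta g=g\ot g$ and $\eps(g)=1$, so the convolution identity $r\star r^{-1}=\eps\ot\eps$ evaluated on $a\ot g$ collapses to $r(a_{(1)},g)\,r^{-1}(a_{(2)},g)=\eps(a)$. Thus
\[
ga \;=\; g\,a_{(1)}\,r(a_{(2)},g)\,r^{-1}(a_{(3)},g).
\]
Now (CQT3) with $b=g$ becomes $r(a_{(1)},g)\,a_{(2)}\,g=g\,a_{(1)}\,r(a_{(2)},g)$; substituting this inside the right-hand side and then commuting the scalar $r^{-1}(a_{(3)},g)$ past $g$ yields exactly $\J_g(a)\,g$.

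For $\J_g$ to be a bialgebra morphism I would first check multiplicativity using (CQT1) specialized to $g$ group-like, namely $r(xy,g)=r(x,g)r(y,g)$, together with the dual identity $r^{-1}(xy,g)=r^{-1}(y,g)r^{-1}(x,g)$, which is a standard consequence of the CQT axioms derived by convolving with $r$. Unitality $\J_g(1)=1$ is immediate. For comultiplicativity I would compare $\Delta(\J_g(a))$, which via $\Delta^{(4)}$ expands to $r(a_{(1)},g)\,r^{-1}(a_{(4)},g)(a_{(2)}\ot a_{(3)})$, with $(\J_g\ot\J_g)\Delta(a)$, which via $\Delta^{(6)}$ has an extra inner pair of scalars $r^{-1}(a_{(3)},g)\,r(a_{(4)},g)$; this pair collapses by $r^{-1}\star r=\eps\ot\eps$, and after applying the counit axiom the two expressions coincide. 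Counit preservation is trivial since $r(1,g)=r^{-1}(1,g)=1$.

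Invertibility is formal: $\J_g^{-1}(a):=r^{-1}(a_{(1)},g)\,a_{(2)}\,r(a_{(3)},g)$ is a two-sided inverse, verified by a direct application of $r\star r^{-1}=r^{-1}\star r=\eps\ot\eps$. The main obstacle throughout is pure bookkeeping, namely keeping the Sweedler indices straight in the coalgebra-morphism step, where one must align a four-fold and a six-fold iterated coproduct through coassociativity and insert or cancel an $r^{-1}\cdot r$ factor in the correct slot; everything else is a mechanical consequence of the CQT axioms with $g$ group-like.
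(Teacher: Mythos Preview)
The paper does not prove this lemma at all; it is merely recalled from Hayashi \cite[Theorem 2.2]{H} and used as a black box. Your direct verification from the CQT axioms specialized to the group-like $g$ is correct and is essentially the standard argument. One minor slip: your justification for ``counit preservation'' (``since $r(1,g)=r^{-1}(1,g)=1$'') is actually the reason for unitality $\J_g(1)=1$; the identity $\eps\circ\J_g=\eps$ follows instead from $r(a_{(1)},g)\,r^{-1}(a_{(2)},g)=\eps(a)$, which you already established at the start. Apart from this harmless mislabelling, the argument is complete.
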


In particular, $D\in  A(c)$ is a group-like element 
in a cqt bialgebra, so we have a bialgebra isomorphism
$\J:A(c)\to A(c)$ such that
\[
Da=\J(a)D\qquad \text{ for all } a\in A(c)
\]

\begin{defi}\label{def:H(c)}
Let $A(c)[D^{-1}]$ be the $\Bbbk$-algebra 
generated by $A(c)$ and a new element $D^{-1}$ satisfying the 
relations
\begin{equation}\label{eq:D}
DD^{-1}= 1= D^{-1}D. 
\end{equation}
 \end{defi}
\noindent 
It easy to see that $A(c)[D^{-1}]$ is indeed a (non commutative)
 localization of $A(c)$ in $D$. We denote by $\iota: A(c)\to A(c)[D^{-1}]$
the canonical map.
 Notice that, in virtue of Hayashi's result,
$D$ is a normal element, so, 
the general non-commutative localization can be computed in terms
of left (or right) fractions: a general element in $A(c)[D^{-1}]$
is of the form $D^{-n}a$ for some $n\in \N$ and $a\in A(c)$.
The next result follows from \cite[Theorem 3.1]{H}, see also
 \cite[Lemma 2.13]{FG}.

\begin{lem}\label{lem:H(c)bialgebra}
$A(c)[D^{-1}]$ is a coquasitriangular bialgebra.
\end{lem}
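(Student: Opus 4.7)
By the preceding Hayashi lemma, $D\in A(c)$ is normal, so the Ore localization $A(c)[D^{-1}]$ exists as an algebra. The plan is to extend, in turn, the bialgebra structure and then the coquasitriangular form $r$.

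\emph{Bialgebra structure.} Since $D$ is group-like, $\Delta(D)=D\ot D$ is a unit in $A(c)[D^{-1}]\ot A(c)[D^{-1}]$ with inverse $D^{-1}\ot D^{-1}$, and $\eps(D)=1\in k^\times$. By the universal property of Ore localization, the composites $A(c)\to A(c)\ot A(c)\hookrightarrow A(c)[D^{-1}]\ot A(c)[D^{-1}]$ and $A(c)\to k$ extend uniquely to algebra maps out of $A(c)[D^{-1}]$. Coassociativity and the counit identity transfer by uniqueness, and $D^{-1}$ becomes group-like.

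\emph{Extension of $r$.} Two observations drive the construction. First, for any group-like $g\in A(c)$, the maps $r(g,-)$ and $r(-,g)$ are algebra characters on $A(c)$ (immediate from CQT1, CQT2 and $\Delta(g)=g\ot g$). Second, evaluating $r*r^{-1}=\eps\ot\eps$ at $D\ot D$ yields $r(D,D)\in k^\times$, so the characters $r(D,-)$ and $r(-,D)$ lift uniquely to characters on $A(c)[D^{-1}]$. I would then \emph{define}
\[
r(D^{-1},a):=r^{-1}(D,a),\qquad r(a,D^{-1}):=r^{-1}(a,D)\qquad (a\in A(c)),
\]
and extend to mixed monomials $D^{-m}a\ot bD^{-n}$ by iterated use of CQT1 and CQT2; since $\Delta(D^{\pm n})=D^{\pm n}\ot D^{\pm n}$ the recursions close up and give a unique well-defined bilinear form on $A(c)[D^{-1}]\ot A(c)[D^{-1}]$.

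\emph{Verification.} The only new relation introduced by the localization is $DD^{-1}=1=D^{-1}D$, and the prescriptions above are precisely those forced by CQT1 and CQT2 applied to that relation, so no ambiguity arises beyond what already held on $A(c)$. CQT1 and CQT2 on $A(c)[D^{-1}]$ then follow because each side is multiplicative in the appropriate slot and agrees with $r$ on a generating set. The main obstacle is \emph{CQT3 for arguments involving} $D^{-1}$. Here the crucial input is Hayashi's relation $Da=\J_D(a)D$, whose validity is itself CQT3 of $A(c)$ specialized at $a=D$; it converts the noncommutativity of $D$ into a twist precisely compatible with CQT3. Combined with its rearrangement $aD^{-1}=D^{-1}\J_D(a)$, this reduces CQT3 on $A(c)[D^{-1}]$ for elements of the form $D^{-m}a\ot bD^{-n}$ back to CQT3 on $A(c)$, completing the proof.
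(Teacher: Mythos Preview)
The paper does not give its own proof of this lemma; it simply records that the result follows from \cite[Theorem 3.1]{H} (see also \cite[Lemma 2.13]{FG}). Your outline is correct and is precisely in the spirit of Hayashi's argument: extend $\Delta$ and $\eps$ via the universal property of Ore localization (using that $D$ is normal and group-like), then extend $r$ by forcing $r(D^{-1},-)=r^{-1}(D,-)$ and $r(-,D^{-1})=r^{-1}(-,D)$, and finally reduce CQT3 on the localization to CQT3 on $A(c)$ through the commutation rule $Da=\J_D(a)D$.

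One small comment on presentation: the step ``extend to mixed monomials $D^{-m}a\ot bD^{-n}$ by iterated use of CQT1 and CQT2'' is correct but a little informal about well-definedness, since an element of $A(c)[D^{-1}]$ has many representations $D^{-m}a$. A clean way to tighten this is to note that CQT1 says the map $L_r:A(c)\to (A(c)^*,\ast)$, $a\mapsto r(a,-)$, is an algebra homomorphism into the convolution algebra, and that $L_r(D)$ is a unit there with inverse $L_{r^{-1}}(D)$ (since $D$ is group-like and $r$ is convolution-invertible). By the universal property of localization, $L_r$ extends uniquely to $A(c)[D^{-1}]$ with $L_r(D^{-1})=L_{r^{-1}}(D)$; the symmetric argument with CQT2 handles the second slot. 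This packages exactly what you wrote, but makes well-definedness automatic rather than something to be checked by hand.
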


\subsection{Main result}

In this section
\begin{itemize}
\item  $(V,c)$ is a finite dimensional braided vector space, 
\item $A(c)$ is the FRT construction,
\item  we assume that $A(c)$ admits a WGF algebra
$\B$
(see Definition \ref{def:finite-Nichols-type}),
denote $D$ its associated quantum determinant.
\end{itemize}

Since $D\in A(c)$ is a group-like element, it must be invertible
in the Hopf envelope of $A(c)$.
In \cite {FG} we studied the problem of deciding 
if inverting $D$ is enough in order to get a Hopf algebra.
In other words, to decide if $A(c)[D^{-1}]$ is a Hopf algebra, and 
moreover, to give
 the formula
for the antipode. 
The first main result 
(see \ref{not:Tij} for the notation of the $T_i^j$'s) 
in \cite{FG} is:

\begin{teo}\cite[Theorem 2.19]{FG})
If the canonical map $\iota: A(c)\to A(c)[D^{-1}]$ is injective
 then the category of $A(c)[D^{-1}]$-comodules
is rigid, tensorially generated by
$V$ and $\Bbbk D^{-1}$.
As a consequence, $A(c)[D^{-1}]$ is a 
 Hopf algebra. Moreover, the formula for the antipode is
given on generators by
$\SS(D^{-1})=D$, and 
\[
\SS(t_i^j):=T_i^jD^ {-1}
\hskip 1cm
(\forall 1\leq i,j\leq n).
\]
\end{teo}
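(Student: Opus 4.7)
My strategy is to establish rigidity of the $A(c)[D^{-1}]$-comodule category by exhibiting an explicit right dual of $V$, and then read off the Hopf structure and antipode formula from the coaction on this dual.

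The WGFA axioms supply the candidate dual. The subspace $\B^{top-1}\subset\B$ is an $n$-dimensional $A(c)$-subcomodule with basis $\{\omega^1,\dots,\omega^n\}$ paired to the basis $\{x_1,\dots,x_n\}$ of $V$ by $x_i\omega^j=\delta_i^j\b$, and has coaction $\lambda(\omega^i)=\sum_j T_j^i\ot\omega^j$. To turn this into a right dual of $V$ over $A(c)[D^{-1}]$, I twist by the one-dimensional invertible comodule $\Bbbk_{D^{-1}}$ (coaction $1\mapsto D^{-1}\ot 1$) and set $V^\vee:=\B^{top-1}\ot\Bbbk_{D^{-1}}$, whose basis $\nu^i:=\omega^i\ot 1$ satisfies $\rho(\nu^i)=\sum_j T_j^iD^{-1}\ot\nu^j$. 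Define $\ev:V^\vee\ot V\to\Bbbk$ by $\nu^i\ot x_j\mapsto\delta_j^i$ and $\coev:\Bbbk\to V\ot V^\vee$ by $1\mapsto\sum_i x_i\ot\nu^i$. Colinearity of $\ev$ is immediate from Proposition \ref{Lagrange}: it reduces to $\sum_k t_i^kT_k^jD^{-1}=\delta_i^j$, namely the row Lagrange formula $\sum_k t_i^kT_k^j=\delta_i^j D$ divided by $D$. The triangle identities will then follow formally from $x_i\omega^j=\delta_i^j\b$ together with the definitions.

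The heart of the argument is the colinearity of $\coev$, which amounts to a transposed or ``column'' Lagrange identity in $A(c)$ of the form $\sum_i T_j^i t_k^i=\delta_j^k D$ (after using normality of $D$ and Hayashi's automorphism $\J$ to move $D^{-1}$ past the $t_k^i$). This column identity is not immediate from the WGFA axioms; I would derive it by combining the row identity of Proposition \ref{Lagrange} with the co-quasi-triangular structure of $A(c)$, which relates the $T_i^j$ to the $t_k^\ell$ via the braiding $c$ through the embedding of $\B$ into the braided category of $A(c)$-comodules. The place where the hypothesis of injectivity of $\iota$ enters is that such derivations naturally produce the desired identity only modulo the kernel of $\iota$, i.e. up to $D$-torsion in $A(c)$; injectivity forces the torsion to vanish and delivers the identity. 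This is the main obstacle, and precisely the step that the rest of the paper is devoted to bypassing through the Dubois-Violette--Launer universal construction.

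Once $\ev$ and $\coev$ are colinear and satisfy the triangle identities, $V$ has a right dual $V^\vee$ in the $A(c)[D^{-1}]$-comodule category. Since $\Bbbk_{D^{-1}}$ is invertible (inverse $\Bbbk_D$) and since $A(c)[D^{-1}]$ is algebra-generated by $\{t_i^j\}\cup\{D^{-1}\}$, every finite-dimensional comodule is a subquotient of tensor powers of $V$, $V^\vee$, $\Bbbk_{D^{\pm 1}}$ and hence has a dual; the comodule category is therefore rigid and tensorially generated by $V$ and $\Bbbk_{D^{-1}}$. Tannakian reconstruction then upgrades $A(c)[D^{-1}]$ from bialgebra to Hopf algebra. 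The antipode on generators is finally forced: the coaction matrix of the right dual $V^\vee$ is $(T_j^iD^{-1})$, which must coincide with $\SS$ applied to the transpose of the coaction matrix $(t_i^j)$ of $V$, yielding $\SS(t_i^j)=T_i^jD^{-1}$; and $\SS(D^{-1})=D$ since $D$ is group-like.
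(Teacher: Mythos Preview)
This theorem is quoted from \cite{FG} and the present paper does not supply its own proof, so there is nothing here to compare against directly. Your outline --- build an explicit dual of $V$ from $\B^{top-1}$ twisted by $\Bbbk_{D^{-1}}$, verify the evaluation/coevaluation are colinear, and then invoke Tannakian reconstruction --- is the natural strategy and is almost certainly the one used in \cite{FG}.

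There is, however, a concrete slip in your identification of which map is the ``easy'' one. With $\ev:V^\vee\otimes V\to\Bbbk$ as you wrote it (i.e.\ $V^\vee$ on the left), colinearity unfolds to
\[
\sum_k T_k^{\,i}D^{-1}t_j^{\,k}=\delta_j^{\,i},
\]
which is the \emph{column} identity, not the row identity of Proposition~\ref{Lagrange}. The row identity $\sum_k t_i^{\,k}T_k^{\,j}D^{-1}=\delta_i^{\,j}$ is what one obtains from the colinear multiplication $V\otimes\B^{top-1}\to\B^{top}$, so it is the map $V\otimes V^\vee\to\Bbbk$ (with $V$ on the left) whose colinearity is immediate. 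Your $\ev$ and $\coev$ are in the left-dual pattern while your $V^\vee=\B^{top-1}\otimes\Bbbk_{D^{-1}}$ is the right-dual candidate; swapping the sides (or equivalently working with the other dual ${}^\vee V=\Bbbk_{D^{-1}}\otimes\B^{top-1}$) fixes this, but you should redo the bookkeeping.

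The substantive gap is the one you yourself flag: you do not actually derive the column identity. Saying that it ``would'' follow from the CQT structure and that injectivity of $\iota$ kills the $D$-torsion is a plausible heuristic, not a proof; in fact the column identity in the form needed is exactly equation~\eqref{eq:teocompfirendly}, which the present paper isolates as an independent hypothesis in the companion Theorem from \cite{FG} and then works hard (Theorem~\ref{main}) to establish unconditionally via the Dubois-Violette--Launer construction. So the step you leave open is precisely the one that carries all the weight.
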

With the notation of the automorphism $\J=\J_D$ given by Hiyashi's
theorem, the second
 main results in \cite{FG} is:

\begin{teo}\cite[Theorem 2.21]{FG})
Assume the following equality holds in $A(c)$ for all $1\leq i,j\leq n$:
\begin{equation}\label{eq:teocompfirendly}
\sum\limits_{k=1}^{n}\J(T_i^k)t^j_ k = \delta_ i^j D.
\end{equation}
Then $A(c)[D^{-1}]$ is a  Hopf algebra and the formula for the antipode  on generators is given by
$\SS(D^{-1})=D$, and
$
\SS(t_i^j):=T_i^jD^ {-1}
$ for all $1\leq i,j\leq n$.
\end{teo}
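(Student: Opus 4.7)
The plan is to show that under hypothesis \eqref{eq:teocompfirendly}, the generating matrix $\mathfrak t=(t_i^j)$ is two-sided invertible in $M_n(A(c)[D^{-1}])$ with inverse given by the entries $T_i^j D^{-1}$; this invertibility, together with the group-likeness of $D^{-1}$, then produces the antipode. For the right-inverse identity, Proposition \ref{Lagrange} gives at once
\[
\sum_k t_i^k\,T_k^j\,D^{-1} \;=\; \delta_i^j\,D\cdot D^{-1} \;=\; \delta_i^j,
\]
requiring no extra hypothesis beyond what is already in force. For the left-inverse identity I would exploit the normality of $D$ granted by Hayashi's lemma, which applies to $A(c)[D^{-1}]$ via Lemma~\ref{lem:H(c)bialgebra}: the identity $Da=\J(a)D$, specialised to $a=D$, forces $\J(D)=D$, while passing to $D^{-1}$ yields $D^{-1}a=\J^{-1}(a)D^{-1}$ for every $a$. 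Consequently
\[
\sum_k T_i^k\,D^{-1}\,t_k^j \;=\; \Big(\sum_k T_i^k\,\J^{-1}(t_k^j)\Big)D^{-1},
\]
and the requirement that this equal $\delta_i^j$ becomes, upon applying $\J$ to the bracketed sum, precisely \eqref{eq:teocompfirendly}.

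With $\mathfrak t$ two-sided invertible, set $\SS(t_i^j):=T_i^j D^{-1}$ and $\SS(D^{-1}):=D$. The antipode identity on $D^{-1}$ is immediate because $D^{-1}$ is group-like, and on each $t_i^j$ it reads, via $\Delta(t_i^j)=\sum_k t_i^k\otimes t_k^j$,
\[
\sum_k t_i^k\,\SS(t_k^j) \;=\; \delta_i^j \;=\; \sum_k \SS(t_i^k)\,t_k^j,
\]
which are the two matrix identities just established.

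The main obstacle is then to upgrade these generator-level formulas for $\SS$ to a globally well-defined algebra antihomomorphism on $A(c)[D^{-1}]$, i.e.\ to check compatibility with the FRT relations and with $DD^{-1}=1$. Rather than verifying this relation by relation (which would require, in particular, an explicit closed form for $\SS(D)$), the cleanest path is abstract: in any bialgebra, a convolution inverse to $\id$, when it exists and satisfies the antipode identity on a set of algebra generators, is uniquely determined and is automatically an algebra antihomomorphism, so consistency with the defining relations is forced. Equivalently, one may route through the first main result of \cite{FG} (Theorem 2.19): the invertibility of $\mathfrak t$ in $M_n(A(c)[D^{-1}])$ provides a rigid dual to the comodule $V$ in $A(c)[D^{-1}]$-comodules, which identifies $A(c)[D^{-1}]$ with its image in the Hopf envelope of $A(c)$ and transports the antipode back, yielding exactly the proposed formula on generators. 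I expect this consistency bookkeeping, rather than either of the two explicit computations above, to be the most delicate step.
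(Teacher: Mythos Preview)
The paper does not prove this theorem; it is quoted verbatim from \cite[Theorem~2.21]{FG} and invoked as a black box inside the proof of Theorem~\ref{main}. The only hint the present paper gives about the argument in \cite{FG} is the sentence ``In \cite{FG} we show that \eqref{delotrolado} implies that $S$ is actually well-defined as antialgebra map, and that also $S$ satisfies the antipode axiom on the left.'' So there is no in-paper proof to compare against.

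Your computational reductions are correct and match exactly how the paper repackages the two identities in the proof of Theorem~\ref{main}: Proposition~\ref{Lagrange} yields the right-inverse identity $\sum_k t_i^k T_k^j D^{-1}=\delta_i^j$ (equation~\eqref{facil}), and your use of $D^{-1}a=\J^{-1}(a)D^{-1}$ together with $\J(D)=D$ correctly shows that $\sum_k T_i^k D^{-1} t_k^j=\delta_i^j$ (equation~\eqref{delotrolado}) is equivalent to the hypothesis~\eqref{eq:teocompfirendly}.

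The genuine gap is in your treatment of well-definedness of $\SS$. Your ``abstract'' principle is circular as stated: the standard fact is that \emph{if} a convolution inverse to $\id_A$ exists as a $k$-linear map on all of $A$, \emph{then} it is unique and automatically an algebra antihomomorphism; but existence on all of $A$ is exactly what is at stake, and satisfying the antipode identities on a set of algebra generators does not by itself produce such a linear map---one must still verify that the recipe $t_i^j\mapsto T_i^jD^{-1}$, $D^{-1}\mapsto D$ respects the FRT relations~\eqref{eq:FRT} and the relation $DD^{-1}=1$. Your alternative route through \cite[Theorem~2.19]{FG} is not available either: that theorem requires injectivity of $\iota:A(c)\to A(c)[D^{-1}]$, an independent hypothesis which (as the paper remarks just before Theorem~\ref{main}) can fail. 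The quoted sentence indicates that \cite{FG} handles well-definedness by a direct relation-by-relation check; this is precisely the step you have left undone.
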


Now, without assuming $\iota: A(c)\to A(c)[D^{-1}]$
 (there are examples
with generic
braidings of diagonal type
where $\iota$ is not injective,
see last example in \cite{FG}), nor asuming
 \eqref{eq:teocompfirendly},
we can prove the main result of this paper:

\begin{teo}\label{main}
$A(c)[D^{-1}]$ is always a Hopf algebra.
\end{teo}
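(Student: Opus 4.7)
The plan is to realize $A(c)[D^{-1}]$ as a surjective bialgebra image of the universal Hopf algebra $H(\ev_l,\ev_r)$ from the preceding section, using the dual comodule $V^{*}$ encoded by the WGF structure. Once such a surjection is produced, $A(c)[D^{-1}]$ inherits a Hopf structure by descent.

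First I would construct a candidate dual comodule. Take $V^{*}:=\B^{top-1}$ with basis $\{\omega^j\}$ from Notation \ref{not:Tij}, and endow it with an $A(c)[D^{-1}]$-coaction $\rho(\omega^j)=\sum_i T_i^j D^{-1}\ot\omega^i$. Well-definedness uses $\Delta(T_j^i)=\sum_k T_k^i\ot T_j^k$, group-likeness of $D$, and Hayashi's normality $Da=\J(a)D$. Then form $W:=V\oplus V^{*}$ with its natural (colinear) projection $\pi_V$ and the natural evaluations $\ev_l:V^{*}\ot V\to k$ and $\ev_r:V\ot V^{*}\to k$, the latter defined via an identification $V\cong V^{**}$ coming from the \emph{second} non-degenerate pairing in (WGF4).

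The central verification is colinearity of $\ev_l$ and $\ev_r$ over $A(c)[D^{-1}]$. Colinearity of $\ev_l$ is an immediate rewriting of Proposition \ref{Lagrange} after multiplying through by $D^{-1}$. Colinearity of $\ev_r$ is the main obstacle, since it encodes the second antipode axiom which previously needed the ad hoc hypothesis \eqref{eq:teocompfirendly}. The new ingredient I intend to use is that the second pairing $\B^{top-1}\ot\B^{1}\to\B^{top}$ of (WGF4), by an argument symmetric to the one proving Proposition \ref{Lagrange}, produces a \emph{dual} Lagrange-type identity; combined with Hayashi's automorphism $\J_D$ from the CQT structure of $A(c)$, this should translate into exactly the relation required for $\ev_r$ to be colinear, without any extra hypothesis.

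Once both evaluations are colinear, Proposition \ref{propuniversal} gives a bialgebra map $\phi:H(\ev_l,\ev_r)\to A(c)[D^{-1}]$. Surjectivity is then clear: the image contains every $t_i^j$ and every $T_i^j D^{-1}$, hence the element $D\in A(c)$ (a polynomial in the $t_i^j$'s); and since any lift $D_H\in H(\ev_l,\ev_r)$ of $D$ is group-like in a Hopf algebra, $\phi(\SS_H(D_H))=D^{-1}$ by uniqueness of two-sided inverses. To conclude that $A(c)[D^{-1}]$ is Hopf, it remains to verify that $\ker\phi$ is stable under $\SS_H$: this reduces to checking that the FRT relations on $V$ are sent by $\SS_H$ to consequences of relations already holding in $A(c)[D^{-1}]$, which should hold via the colinearity of $c$ on $V$ and the induced colinearity of its dual on $V^{*}$, both controlled by the CQT property. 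The hardest step, as indicated, is the colinearity of $\ev_r$.
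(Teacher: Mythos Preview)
Your outline has the right architecture—build $W=V\oplus V^{*}$, invoke the universal property to get a bialgebra map from $H(\ev_l,\ev_r)$ into $A(c)[D^{-1}]$, and then extract the Hopf structure—but the crucial step, colinearity of the ``hard'' evaluation, is not established by the argument you sketch.

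First a bookkeeping point: with your coaction $\rho(\omega^{j})=\sum_{i}T_{i}^{j}D^{-1}\ot\omega^{i}$, the \emph{easy} evaluation is $\ev_{r}:V\ot V^{*}\to k$, not $\ev_{l}$. Indeed $(\id\ot\ev_{r})\rho(x_{i}\ot\omega^{j})=\sum_{k}t_{i}^{k}T_{k}^{j}D^{-1}$, which is $\delta_{i}^{j}$ by Proposition~\ref{Lagrange}. The evaluation $\ev_{l}:V^{*}\ot V\to k$ requires instead $\sum_{k}T_{k}^{j}D^{-1}t_{i}^{k}=\delta_{i}^{j}$, which is exactly \eqref{delotrolado}—the identity that the whole theorem is meant to prove.

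Your plan to obtain the hard direction from a ``dual Lagrange'' identity plus Hayashi does not close the gap. Colinearity of the multiplication $\B^{top-1}\ot\B^{1}\to\B^{top}$ (with the \emph{untwisted} $A(c)$-coaction on $\B^{top-1}$) yields
\[
\sum_{k,l}T_{k}^{\,i}\,t_{j}^{\,l}\,M^{k}_{\;l}=M^{i}_{\;j}\,D,\qquad\text{where }\omega^{i}x_{j}=M^{i}_{\;j}\,\b.
\]
After inserting $D^{-1}$ you obtain $\sum_{k,l}T_{k}^{\,i}\,t_{j}^{\,l}\,D^{-1}M^{k}_{\;l}=M^{i}_{\;j}$; but colinearity of your $\ev_{l}$ demands $\sum_{k,l}T_{k}^{\,i}\,D^{-1}\,t_{j}^{\,l}\,M^{k}_{\;l}=M^{i}_{\;j}$. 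The two differ precisely by moving $D^{-1}$ across $t_{j}^{\,l}$, i.e.\ by the automorphism $\J_{D}$, and there is no reason for $\J_{D}(t_{j}^{\,l})=t_{j}^{\,l}$. This is exactly the obstruction that forced the extra hypothesis \eqref{eq:teocompfirendly} in \cite{FG}; your sketch re-encounters it rather than removing it.

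The paper's proof avoids this trap by a different device. It sets ${}^{*}V:=kD^{-1}\ot\B^{top-1}$ (left twist, not right), so that the left pairing ${}^{*}V\ot V\to k$ is literally the colinear composite $kD^{-1}\ot\B^{top-1}\ot V\xrightarrow{\id\ot m}kD^{-1}\ot kD\cong k$; this gives the dual Lagrange identity for free, no $\J$-twist. For the right pairing $V\ot{}^{*}V\to k$ the paper does \emph{not} attempt to verify an identity; instead it uses that $A(c)[D^{-1}]$ is coquasitriangular, so the categorical braiding furnishes a \emph{colinear} isomorphism ${}^{*}V\cong V^{*}=\B^{top-1}\ot kD^{-1}$, and the right pairing is then defined as the composite $V\ot{}^{*}V\cong V\ot V^{*}\xrightarrow{m\ot\id}kD\ot kD^{-1}\cong k$, again a composition of manifestly colinear maps. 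This is the idea you are missing.

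Finally, the endgame in the paper is also lighter than yours. It does not try to realize $A(c)[D^{-1}]$ as a Hopf quotient (which would require checking $S$-stability of the FRT relations—your sketch for this is circular, since it presupposes the Hopf structure). Rather, from the bialgebra map $H(\ev_{l},\ev_{r})\to A(c)[D^{-1}]$ one only reads off that the $n\times n$ matrix $\mathfrak t=(t_{i}^{j})$ is the image of a two-sided invertible matrix, hence is itself two-sided invertible; since Lagrange already gives $\mathfrak t\cdot(\mathfrak T D^{-1})=\Id$, uniqueness of inverses forces $(\mathfrak T D^{-1})\cdot\mathfrak t=\Id$, which is \eqref{delotrolado}, and then Theorem~2.21 of \cite{FG} finishes.
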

\begin{proof}
First notice that equations
\eqref{eq:A(c)-D} 
(Proposition \ref{Lagrange})
and
\eqref{eq:teocompfirendly} are equations in $A(c)$, but in $A(c)[D^{-1}]$
one can write respectively as

\begin{equation}\label{facil}
\sum\limits_{k=1}^n t_i^kT^j_ kD^{-1} = \delta_ i^j 
 \end{equation}
and
\begin{equation}\label{delotrolado}
\delta_ i^j =
\sum\limits_{k=1}^{n}D^{-1}\J(T_i^k)t^j_ k 
=
\sum\limits_{k=1}^{n}T_i^kD^{-1}t^j_ k 
\end{equation}
We know equation \eqref{facil} is true because 
of \eqref{eq:A(c)-D}, and it means that defining
$S(t_i^j)=T_i^jD^{-1}$, in case $S$ is well-defined and 
antimultiplicative, it will
 satisfy the antipode axiom on the right. In \cite{FG} we show that 
\eqref{delotrolado} implies that $S$ is actually well-defined as 
antialgebra
map, and that also
$S$ satisfies the antipode axiom on the left.
 So,
 it will be enough to show that equation
\eqref{eq:A(c)-D} implies equation \eqref{delotrolado}.

Using that $A(c)[D^{-1}]$ is coquasitriangular,
the category of comodules is braided, and so, the following two
comodules are isomorphic:
\[
V^*:=\B_{top-1}\ot k D^{-1}
\]
\[
{}^*V:=kD^{-1}\ot \B_{top-1}
\]
Fix an isomorphism.
Define $W:={}^*V\oplus V$
and $b:W\times W\to k$ the
bilinear map as follows:
\[
b(v,v')=0=b(\phi,\phi') \hskip 1cm \forall\ \  v,v'\in V, 
\phi,\phi'\in {}^*V
\]
Define $b(\phi,v)$ through the multiplication
$m:\B_{top-1}\ot V\to \B_{top}$:
\[
{}^*V\otimes V=kD^{-1}\ot \B_{top-1}\otimes V
\overset{\id\ot m}{\longrightarrow}
kD^{-1}\ot \B_{top}=kD^{-1}\ot kD\cong k
\]
and finally $b(v,\phi)$
through the fixed isomorphism ${}^*V\cong V^*$:

\[
V\ot {}^*V=V\ot (kD^{-1}\ot\B_{top-1}) 
\cong  V\ot V^*=
V\ot (\B_{top-1}\ot D^{-1})
\overset{m\ot \id}{\longrightarrow}
k D\ot kD^{-1}
\cong 
k
\]
It is clear that $b$ is non degenerate, so $H(b)$ is a Hopf algebra.
But also, because the multiplication in $\B$ is $A(c)$ colinear, and
the isomorphism $kD^{-1}\ot kD\cong k$ is $A(c)[D^{-1}]$ colinear,
we get that $b$ is also $A(c)[D^{-1}]$ colinear. Using the universal property for $H(b)$ we get a map
\[
H(b)\to A(c)[D^{-1}]
\]
Moreover, $V^*$ and $V$ are $A(c)[D^{-1}]$ comodules, hence
the projection $\pi_V:V\oplus V^*\to V$ is colinear, and the above epimorphism factor through
\[
H(b)\to H(\ev_l,\ev_r)\to A(c)[D^{-1}]
\]
Recall  the set of generators $\{t'{}_i^j, i,j=1,\dots,2n\}$ of
$H(b)$ and $H(\ev_l,\ev_r)$. Notice the subset
 $\{t'{}_i^j, i,j=1,\dots,n\}$ maps into the generators
$\{t_i^j, i,j=1,\dots,n\}$ of
 $A(c)$.

We also know from
Proposition \ref{Lagrange}
that the following equation holds:
\[\sum\limits_{k=1}^n t_i^kT^j_ k = \delta_ i^j D \qquad \text{ for all }1\leq i,j\leq n.
\]
so, in $A(c)[D^{-1}]$ we have

\begin{equation}\label{eqid}
\sum_{k=1}^n t_i^kT_k^jD^{-1}=\delta_i^j
\end{equation}
or in matrix notation
\[
\frak t\cdot  \frak T=\id_{n\times n}
\]
where $(\frak t)_{ij}=t_i^j$ and  $(\frak T)_{ij}=T_ i^jD^{-1}$
are elements of $M_n(A(c)[D^{-1}])$.

Now one can compute in $M_n(H(b))$
the equality
given by the antipode property, for $i,j=1,\dots,n$:
\[
\delta_{i}^j=
\epsilon(t'{}_i^j)=(t'{}_i^j)_1S((t'{}_i^j)_2)=\sum_{k=1}^{2n}t'{}_i^kS(t'{}_k^j)
=\sum_{k,l,r=1}^{2n}t'{}_i^k b^{kl}t'{}_l^rb_{rj}
\]
But in $H(\ev_l,\ev_r)$, using $t_a^{n+b}=0=t_{n+a}^b$,
 the above sum gives
\[
=\sum_{k=1}^{n}\sum_{l,r=1}^{2n}t'{}_i^k b^{kl}t'{}_l^rb_{rj}
\]
And because the only possible nonzero coefficients of the bilinear
form are $b_{k,n+l}$, or $b_{n+k,l}$ ($k,l=1,\dots,n$) this 
is the same as
\[
=\sum_{k=1}^{n}\sum_{l=1}^{2n}\sum_{r=1}^n
t'{}_i^k b^{kl}t'{}_l^{n+r}b_{n+r,j}
\]
But also,  in $H(\ev_l,\ev_r)$ we have $t'{}_l^{n+r}=0$ for $l,r=1,\dots,n$, so
\[
=\sum_{k,l,r=1}^{n}
t'{}_i^k b^{k,n+l}t'{}_{n+l}^{n+r}b_{n+r,j}
\]
Similar equation of the left-axiom of the antipode, shows that
the matrix $\frak t'\in M_n(H(\ev_l,\ev_r))$ is invertible (and not only
the $(t'{}_i^j)_{i,j=1}^{2n}$-matrix in $M_{2n}(H(\ev_l,\ev_r))$.

Equation \eqref{eqid} says that the 
matrix $\mathfrak t\in M_n(A(c)[D^{-1}])$
has right inverse, but because $\frak t$
is the image of 
$\mathfrak  t'\in M_n(H(\ev_l,\ev_r))$ and $\frak t'$ is invertible
we conclude that $\frak t$ is also invertible, hence, 
it has  right inverse and it is equal to the left inverse. This is precisely
  condition \eqref{delotrolado}
\end{proof}

\section{The  (locally finite) graded case and comments
on other related  works}

\subsection{$\Z$-Graded vector spaces and graded coactions}

Assume  $W=\bigoplus_{p\in \Z}W_p$ is a graded vector space
 such that $W_p$ is finite dimensional
for every $p\in\Z$. Let
\[
C_p:=\End(W_p)^*\ \hbox{ and } C_{gr}:=\bigoplus_{p\in\Z}C_p
\]
For every $p$, fix \{$x_1^{(p)},\dots,x_{\dim W_p}^{(p)}\}$ a basis of $W_p$,
denote $\{t^{(p)}{}_ i^j\}_{i,j=1}^{\dim W_p}$ the corresponding basis of $C_p$, then $W$ is a $C_p$-comodule by
defining
\[
\rho\big(x_i^{(p)}\big):=\sum_{j=1}^{\dim W_p}t^{(p)}{}_i^j\ot x_j\]
It verifies $\rho(W_p)\subseteq C_p\ot W_p\subseteq C_{gr}\ot W_p$, 
that is, it is a graded $C_{gr}$-comodule.
Define
$T(C_{gr})$ the tensor algebra with comultiplication extending the
 comultiplication of $C_{gr}$. $W$ is also a (graded) $T(C_{gr})$-comodule,
 and hence $W^{\ot n}$ is a 
$T(C_{gr})$-comodule for any $n$.

If $F=\{f_i:V^{\ot n_i}\to V^{\ot m_i}\}_{i\i I}$ is a family of
 family of  linear maps,
the ideal  $\II_F$ 
can be defined exactly in the same way, 
 $A_{gr}(F)=TC_{gr}/\II_F$ will be a bialgebra and
$W$ will be  a graded $A_{gr}(F)$-comodule.

\begin{rem}
If $\dim V=\sum_{p\in\Z}\dim V_p<\infty$ then $C_{gr}$ then one can
 also
consider $C=\End_k(V)^*$, and $E=\{e_p:V\to V\}$ the family of 
 projectors corresponding
the direct summands
 $V_p$ (i.e. $e_pe_{q}=\delta_{p,q}e_q$, $Im(e_p)=V_p$). We have
$T(C_{gr})=TC/\II_{E}$. If $F=\{f_i:V^{\ot n_i}\to V^{\ot m_i}\}$
is a family of graded maps, then
$A_{gr}(F)$ is a quotient of $A(F)$:
\[
A(F)\twoheadrightarrow A(E\cup F)=A_{gr}(F)
\]
\end{rem}

In particular, if $\B$ a graded algebra, with unit
$u$ and multiplication $m:\B\ot \B\to\B$, one may consider 
graded comodule structures on $\B$, and they will be governed by 
$A_{gr}(u,m)$.

\begin{ex}
Let $V=k[x]/x^2$ be considered as a unital algebra with grading
$|1|=0$ and $|x|=1$. Every graded component is of
 dimension 1, so, the graded comodule structures are
necessarily of the form
\[
\rho(1)=a\ot 1\]
\[
\rho(x)=d\ot 1\]
with $a$ and $d$ group-like elements. If the unit map $u:k\to k[x]/x^2$
is colinear then $a=1$, and $A_{gr}(u,m)=k[d]\cong k[\N_0]$,
 one 
lose the differential structure
(compare with Example \ref{dg}).  However, we will see  that
in some cases the graded comodule structures may still
be  very 
interesting, since they will correspond, in the
 quadratic case, to Manin bialgebras.
\end{ex}

An easy lemma is the following:
\begin{lem} Let $\B$ be a connected associative  
graded algebra, namely
 $\B_{-n}=0$ for $n>0$ and $\B_0=k$. Denote $\B_1=V$ and assume
$\B$ is generated by $V$. That is,
$\B=TV/(R)$ where $R$ is homogeneous
 (but not necessarily concentrated in
some specific degree).
Then $A_{gr}(\B):=A_{gr}(u,m)$ ($u$ is the unit and $m$ the 
multiplication) is generated by  $C_1:=\End(V)^*$.
That is, 
the map  $T(C_1)\hookrightarrow T(C_{gr})$
induces a surjective map $T(C_1)\to A_{gr}(\B)$.
\end{lem}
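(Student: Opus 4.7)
The plan is to prove, by induction on the degree $p$, that each generator $t^{(p)}_{i,j}$ of $A_{gr}(\B)$ lies in the image of the natural map $T(C_1)\hookrightarrow T(C_{gr})\twoheadrightarrow A_{gr}(\B)$. The key input is that the colinearity of $u$ and of $m$ are imposed as relations in $A_{gr}(\B)=T(C_{gr})/\II_{\{u,m\}}$, which is exactly the assertion that the restriction of the coaction $\rho$ to $\B$ is a (graded) algebra map when $A_{gr}(\B)\ot\B$ is given the tensor product algebra structure.

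First I would dispose of the degrees $p=0$ and $p=1$. The piece $\B_0$ is one-dimensional with basis $\{1_\B\}$, so colinearity of $u:k\to\B$ forces $\rho(1_\B)=1_{A_{gr}(\B)}\ot 1_\B$, which gives $t^{(0)}_{1,1}=1$ in $A_{gr}(\B)$. The degree $p=1$ generators $t^{(1)}_{i,j}$ are precisely the basis of $C_1$, so they trivially lie in the image of $T(C_1)$.

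For the inductive step, assume $p\geq 2$ and the claim holds for all $q<p$. Since $\B$ is generated by $V=\B_1$, the multiplication map $\B_1\ot\B_{p-1}\to\B_p$ is surjective, so for each basis vector $x_l^{(p)}$ of $\B_p$ I can choose a preimage, writing
\[
x_l^{(p)}=\sum_\alpha v_\alpha^{\,l}\cdot w_\alpha^{\,l},\qquad v_\alpha^{\,l}\in V,\ w_\alpha^{\,l}\in\B_{p-1}.
\]
Applying colinearity of $m$ (that is, multiplicativity of $\rho$ on $\B$), I compute $\rho(x_l^{(p)})=\sum_\alpha \rho(v_\alpha^{\,l})\,\rho(w_\alpha^{\,l})$, and after expanding $\rho(v_\alpha^{\,l})$ via the $t^{(1)}_{i,j}$'s, expanding $\rho(w_\alpha^{\,l})$ via the $t^{(p-1)}_{k,k'}$'s, and re-expressing each product $x_i^{(1)}\cdot x_{k'}^{(p-1)}\in\B_p$ in the chosen basis $\{x_m^{(p)}\}$, one reads off each $t^{(p)}_{l,m}$ as a $k$-linear combination of products $t^{(1)}_{i,j}\cdot t^{(p-1)}_{k,k'}$. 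By the inductive hypothesis the $t^{(p-1)}_{k,k'}$ already lie in the subalgebra generated by $C_1$, so the same holds for the $t^{(p)}_{l,m}$, completing the induction.

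I do not expect a serious obstacle here: the content is essentially the tautology that ``$m$ is colinear'' means $\rho|_\B$ is multiplicative, and once this is combined with generation of $\B$ by $V$ it immediately implies generation of $A_{gr}(\B)$ by the matrix coefficients of $V$. The only bookkeeping subtlety is to correctly compare coefficients when re-expanding products $x_i^{(1)}\cdot x_{k'}^{(p-1)}$ in the fixed basis of $\B_p$, but this is elementary linear algebra and produces the explicit polynomial expression of $t^{(p)}_{l,m}$ in terms of the $t^{(1)}_{i,j}$'s.
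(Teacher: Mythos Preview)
Your proof is correct and follows essentially the same approach as the paper: both arguments rest on the observation that the colinearity of $u$ and $m$ makes $\rho:\B\to A_{gr}(\B)\ot\B$ an algebra map, and then generation of $\B$ by $V$ forces the matrix coefficients of each $\B_p$ to lie in the subalgebra generated by $C_1$. The paper simply writes the formula $\rho(x_{i_1}\cdots x_{i_p})=\sum_{j_1,\dots,j_p} t^{(1)}{}_{i_1}^{j_1}\cdots t^{(1)}{}_{i_p}^{j_p}\ot x_{j_1}\cdots x_{j_p}$ and declares the conclusion, whereas you carry out the same reasoning as an explicit induction on $p$, including the base case $p=0$ (where colinearity of $u$ gives $t^{(0)}_{1,1}=1$) that the paper leaves implicit.
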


\begin{proof}
If $\{x_1,\dots,x_n\}$ is  basis of $V=\B_1$, then the 
elements of the form 
$x_{i_1}\dots x_{i_p}$ generates $\B_p$.
Denote $\rho(x_i)=\sum_jt^{(1)}{}_i^j\ot x_j$. Since
$\rho:\B\to A_{gr}(\B)\ot\B$ is an algebra map,
\[
\rho(x_{i_1}\dots x_{i_p})=\sum_{j_1\dots,j_p}t^{(1)}{}_{i_1}^{j_i}
\cdots t^{(1)}{}_{ i_{p}}^{j_p}\ot x_{i_1}\cdots x_{i_p}
\]
and we see that $C_1$ generates $A_{gr}(\B)$.

\end{proof}

\subsection{The Manin bialgebra}

Recall a quadratic algebra is a $k$-algebra of the form $\Bmanin=TV/(R)$
with  $R\subseteq V^{\ot 2}$. We will assume further that $V$ is finite
 dimensional.
 In the seminal work \cite{M}, Manin define
operations $\bullet$, $\circ$ and $(-)^!$ on quadratic algebras.
He proves that given a quadratic algebra $\Bmanin=TV/(R)$,
then $\uend(\Bmanin):=\Bmanin^!\bullet \Bmanin$ is a bialgebra, $\Bmanin$ is
an $\uend(\Bmanin)$ comodule-algebra, the structure
 map $\rho:\Bmanin\to\uend(\Bmanin)\ot \Bmanin$
satisfies  $\rho(\Bmanin_p)\subseteq \uend(\Bmanin)\ot \Bmanin_p$ $(\forall p)$,
 and moreover, $\uend(\Bmanin)$ is universal
with respect to those properties (see \cite[Section 6.6]{M2}).
As a corollary we have:

\begin{prop}\label{end}
Let $\Bmanin$ be a finitely generated quadratic algebra: $\Bmanin=TV/(R)$.
If $u_\Bmanin:k\to\Bmanin$ denotes the unit map
and
$m_\Bmanin:\Bmanin\ot\Bmanin\to \Bmanin$ its multiplication,
then
\[
A_{gr}(\Bmanin):=
A_{gr}(u_\Bmanin,m_\Bmanin)=\Bmanin^!\bullet \Bmanin
\]
\end{prop}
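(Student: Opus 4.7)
The plan is to identify both $A_{gr}(\Bmanin)$ and Manin's $\Bmanin^!\bullet \Bmanin$ as solutions to the same universal problem, namely parametrizing bialgebras $H$ for which $\Bmanin$ is a graded $H$-comodule algebra; then uniqueness of universal objects gives the desired isomorphism.

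First, by construction of $A_{gr}(F)$ for a family of graded maps, together with the graded version of the universal property in Proposition \ref{propuniversal}, the bialgebra $A_{gr}(u_\Bmanin, m_\Bmanin)$ is the universal bialgebra $H$ equipped with a graded coaction on $\Bmanin$ making both the unit $u_\Bmanin$ and the multiplication $m_\Bmanin$ into $H$-colinear maps; equivalently, turning $\Bmanin$ into a graded $H$-comodule algebra. On the other side, Manin's theorem (cited as \cite[Section 6.6]{M2}) states precisely that $\uend(\Bmanin)=\Bmanin^!\bullet \Bmanin$ also solves this universal problem. Both bialgebras are, moreover, generated by $C_1 = \End(V)^*$; for $A_{gr}(\Bmanin)$ this is the content of the preceding Lemma, and for $\uend(\Bmanin)$ it is a standard feature of Manin's construction.

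Consequently, the $\uend(\Bmanin)$-coaction on $\Bmanin$ induces, via the universal property of $A_{gr}(\Bmanin)$, a unique bialgebra morphism $\alpha\colon A_{gr}(\Bmanin)\to\uend(\Bmanin)$; symmetrically, the $A_{gr}(\Bmanin)$-coaction on $\Bmanin$ yields, by Manin's universal property, a unique bialgebra morphism $\beta\colon \uend(\Bmanin)\to A_{gr}(\Bmanin)$. The compositions $\alpha\circ\beta$ and $\beta\circ\alpha$ are bialgebra endomorphisms that reproduce the original coactions on $\Bmanin$, so each universal property forces them to equal the respective identity, and the two bialgebras are canonically isomorphic.

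The only mildly delicate point is to confirm that asking just $u_\Bmanin$ and $m_\Bmanin$ to be $H$-colinear is equivalent to asking $\Bmanin$ to be an $H$-comodule algebra; this is immediate from the definitions, and combined with the graded structure built into $T(C_{gr})$ (rather than $TC$), it yields exactly the graded comodule-algebra condition used in Manin's framework. Once that identification is made, the proof reduces to a short chase of universal properties, with no computation involving the explicit quadratic dual $\Bmanin^!$ or the $\bullet$-product required.
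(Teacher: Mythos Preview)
Your proof is correct and follows essentially the same approach as the paper's: both argue that $A_{gr}(\Bmanin)$ and $\Bmanin^!\bullet\Bmanin$ are generated by $\End_k(V)^*$ and satisfy the same universal property (being initial among bialgebras for which $\Bmanin$ is a graded comodule algebra), hence coincide. The paper's version is simply more terse, while you have spelled out the construction of the mutually inverse morphisms $\alpha$ and $\beta$ and the equivalence between colinearity of $u_\Bmanin,m_\Bmanin$ and the comodule-algebra condition.
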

\begin{proof}
Both algebras are generated by 
$V^*\ot V=\End_k(V)^*$ and they share the same universal property.
The unique isomorphism determined by the universal properties
is the identity on generators.
\end{proof}

\begin{rem}
In case of a quadratic algebras $B=TV/(R)$, the subspace of defining 
relations of $A_{gr}(\Bmanin)=\Bmanin^!\bullet \Bmanin
$ is clear. It would be interesting to 
expand the class of algebras $B$  where the
defining relations of  $A(B)$ (or $A_{gr}(B)$ if $B$ is graded)
can be made explicit.
\end{rem}

\subsection{$N$-homogeneous  algebras}

In \cite{Po}, the author follows Manin construction for  
$N$-homogeneous algebras. That is, if
$A=TV/R$ where $R\subseteq R^{\ot N}$ for some $N\geq 2$. Define
$R^\perp\subseteq (V^*) ^{\ot N}\cong (V^{\ot N})^*$ the annihilator of $R$ and
$A^!:=T(V^*)/R^\perp$. Notice $(A^!)^!\cong A$. Similarly he defines
the operation $\bullet$ as follows.
For  two $N$-homogeneous algebras $A=TV/(R)$ and $B=TW/(S)$
(where $R\subseteq V^{\ot N}$ and
$S\subseteq W^{\ot N}$ for the same $N$), he define
\[
A\bullet B:=
T(V\ot W)/(\tau (R\ot S))
\]
where $\tau:(V^{\ot N})\ot (W^{\ot N})\to (V\ot W)^{\ot N} $ is defined by
\[
\tau(v_1\ot \cdots \ot v_N\ot w_1\ot \cdots \ot w_N)=
(v_1\ot w_1)\ot 
(v_2\ot w_2)\ot 
\cdots \ot (v_N\ot w_N)\in (V\ot W)^{\ot N}
\]
Denoting $end(A):=A^!\bullet A$, it is a bialgebra and $A$ is a left comodule algebra over it.
 Analogus considerations
for the algebra $A^!$. Finally, he 
defines $e(A)$ as the quotient of $end(A)$
by the relations of $end(A^!)$ (or vice versa). We notice that
 in our setting, for any finitey generated
graded algebra $A$, the universal bialgebra
$A_{gr}(A)$ is defined, independently of the degree of the relations, 
and also works for homogeneous relations of eventualy 
different degrees, and the same for a pair of graded
algebras $(A,A')$
with the same set of generators. For $N$-homogeneous graded algebras,
we don't say that our approach is easier or better than
the one in \cite{Po}, but  we say that our approach is sufficiently
 general and flexible to addapt perfectly to the $N$-homogeneous 
case, and also for multigraded case, or even to the non-graded case
(if the algebras are finite dimensional).

\subsection{Path algebras}

Let $Q=(Q_0,Q_1)$ be a  finite quiver. If $Q$ has no oriented
cycles, then $kQ$ is a finite dimensional $k$-algebra and one may consider
the multiplication map $m:kQ\ot kQ\to kQ$ and
unit $u:k\to kQ$, and consequently the universal bialgebra $A(Q):=A(m,u)$.
But also, the path algebra $kQ$ is naturally graded by length of paths;
 that is, $|x_i|=0$  $\forall i\in Q_0$ 
 and $|x_\alpha|=1$ $\forall \alpha\in Q_1$.
 So, even if $Q$ happens to have cycles, if $Q$ is finite, maybe $kQ$ is not finite dimensional, but
  $kQ$ is a locally finite graded vector space, generated as algebra in degree 0 and 1. Hence, the graded version
 $A_{gr}(Q)$ is defined, and generated by $\End(V_0)^*\oplus \End(V_1)^*$
 where $V_0=k[Q_0]$ and $V_1=k[Q_1]$ are the vector 
 spaces spanned by
 $Q_0$ and $Q_1$ respectively. Since $Q_0$ and $Q_1$ are sets, the vector
 spaces $V_0$ and $V_1$ have cannonical bases $\{x_i:i\in Q_0\}$,
 $\{x_{\alpha}:\alpha\in Q_1\}$, and
 $\End(V_0)^*\oplus \End(V_1)^*$ is the coalgebra with basis
 \[
 \{t_i^j:i,j\in Q_0\}\cup
 \{t_{\alpha}^{ \beta}:\alpha, \beta \in Q_1\}\]
 and comultplication
 \[
 \Delta t_{i}^{j}=\sum_{k\in Q_0}t_{i}^{k}\ot t_{k}^{j}\]
 \[
 \Delta t_{\alpha}^{\beta}=\sum_{\gamma\in Q_1}t_{\alpha }^{\gamma}
 \ot t_{\gamma}^{\beta}\]
 
 \begin{rem}
 If one consider the universal bialgebra associated to the graded
 object $kQ$ and the multiplication map $m:kQ\to kQ\to kQ$
 (i.e. one ignores the unit of this algebra)
 then $A_{gr}(m)$ is the bialgebra generated by
 $\{t_{i}^{j}:i,j\in Q_0\}\cup
 \{t_{\alpha}^{ \beta}:\alpha, \beta \in Q_1\}$ and relations
 \[
 t_{i}^{k}t_{j}^{k}=\delta_{ik}t_{i}^{k}
 \]
 \[
 t_{i}^{t(\beta)}t_{\alpha}^{\beta}
=
\delta_{i,t(\alpha)}t_{\alpha}^{\beta}
\]
\[
t_{\alpha}^{\beta}t_{j}^{s(\beta)}
=
\delta_{j,s(\alpha)} t_{\alpha}^{\beta}
\]

($s$ and $t$ are the source and target maps $s,t:Q_1\to Q_0$)
 with comultiplication induced by
 \[
 \Delta t_{i}^{j}=\sum_{k\in Q_0}t_{i}^{k}\ot t_{k}^{j}\]
 
 \[
 \Delta t_{\alpha}^{\beta}=\sum_{\gamma\in Q_1}
 t_{\alpha}^{\gamma}\ot t_{\gamma}^{\beta}\]

 \end{rem}
 \begin{proof}
 It is straightforward form the relations defining $kQ$:
 \[
x_ix_j=\delta_{ij}x_i\ \forall i,j\in Q_0\]
 \[
x_ix_\alpha=\delta_{i,t(\alpha)}x_\alpha\ \forall i\in Q_0, \alpha\in Q_1\]
 \[
x_ \alpha x_j=\delta_{j,s(\alpha)}x_\alpha
\ \forall j\in Q_0, \alpha\in Q_1\]
and the multiplicativity of the structure map. 
As an illustration we show the second
set of relations.
From
\[
x_ix_{\alpha}=\delta_{i,t(\alpha)}x_\alpha\]
applying $\rho$ we get
\[
\sum_{j\in Q_0}\sum_{\beta\in Q_1} t_{i}^{j}t_{\alpha}^{\beta}\ot x_jx_\beta
=
\delta_{i,t(\alpha)}\sum_{\beta\in Q_1}t_{\alpha}^{\beta}\ot x_{\beta}
\]
But  $x_jx_\beta=x_\beta$ if $j=t(\beta)$ and zero otherwise, so we get
\[
\sum_{\beta\in Q_1} t_{i}^{t(\beta)}t_{\alpha}^{\beta}\ot x_\beta
=
\delta_{i,t(\alpha)}\sum_{\beta\in Q_1}t_{\alpha}^{\beta}\ot x_{\beta}
\]
Since $\{x_{\beta}\}_{\beta\in Q_1}$ are l.i. the result follows.
  \end{proof}
 Changing notaton $t_{i}^{j}\leftrightarrow y_{ij}$
 and
 $t_{\alpha}^{\beta}\leftrightarrow y_{pq}$, the universal bialgebra
 $A_{gr}(m:kQ^{\ot 2}\to kQ)$ is {\em not the same} (they consider weak bialgebras) 
 but {\em very similar} to the ones considered  in Lemma 4.2 of \cite{HWWW}.

\subsection{ $C^*$-context}

In the $C^*$-algebra context, Wang define (see \cite{W})
 a $C*$-algebra associated with a finite dimensional $C^*$-algebra.
That definition includes that a natural state is colinear.
The $C^*$-algebra definition is more restrictive,
for instance, for
the $C*$-algebra given by the group algebra $\C[G]$
of a finite group $G$, it is showed in \cite{Woro} that
 the corresponding $C^*$-Hopf algebra
is commutative, while it is a relatively
 easy exercise to show
that the universal  construction of Section 1 for the algebra
 $M_2(k)$ gives a non commutative nor cocommutative bialgebra.
Using that
$\C[S_3]\cong\C\times\C\times M_2(\C)$ we see that our construction
gives an object different from the one defined by Wang.

\subsection{Lie/Leibniz algebras}

In \cite{AM}, the authors define a commutative bialgebra
associated with a Lie or Leibniz algebra by studying the adjoint
of the functor 
\[
A\leadsto \h\ot A\]
where $A$ is a commutative $k$-algebra and
$\h$ is a fixed Leibniz (or Lie) algebra.  The bracket in the current 
algebra is given by
\[
[x\ot a,y\ot a']:=[x,y]\ot aa'
\]
The Leibniz (resp. Lie) algebra
$A\ot \h$ is called the current algebra.
 From the adjoint functor of the current algebra they define 
what they call
the {\em  universal algebra} of $\h$. This (necessary
 commutative) algebra turns out to be  
 a quotient of a polynomial algebra in $n^2$ variables ($n=\dim\h$)
that in fact is a bialgebra, with a universal property among
commutative bialgebras coacting on $\h$.
Recall that a Leibniz algebra is a generalization of a Lie algebra
in the sense that the operation is not required to be
 antisymmetric, but it satisfies a choice of the Jacobi identity:
\[
[x,[y, z]] = [[x, y], z]-[[x, z], y]\]
Notice that there is a permutation of the letters $x,y,z$, so,
if $A$ is not commutative, it is not clear how to define a Leibniz
structure on $\h\ot A$, and the point of view in \cite{AM} do not
 generalizes to noncommutative algebras.
However, from the point of view of our universal construction 
(Section 1), the non-commutative universal bialgebra coacting
on $\h$ is clear:  just consider the bracket operation as a map
$[-,-]_\h:\h^{\ot 2}\to \h$,
and $A(\h):=A([-,-]_\h)$ will give the
 universal (in general non-commutative)
bialgebra such that  $\h$ is a comodule and  $[-,-]_\h$ is colinear. The 
abelianization
\[
A(\h)_{ab}:=A(\h)/([A(\h),A(\h)])
\]
will be of course commutative, and since the ideal generated by brackets
 is always 
a bi-ideal, this is also a bialgebra, and satisfies the same universal
 property of $A(\h)$ but among commutative bialgebras. We 
conclude that
the universal commutative bialgebra constructed in \cite{AM} is the
 abelianization of $A(\h)$.
Similar comments for the Hopf envelope $H(\h)$ and
$H(\h)_{ab}$.
However, the advantage of having a noncommutative universal
bialgebra/Hopf coacting on $\h$ is clear.
For instance 
skew-derivations, (e.g. differential graded structures) are detected
by
non-commutative bialgebras.
In a similar way to the example $k[x]/(x^2)$,
the smallest non-abelian Lie algebra already gives a nontrivial
(non commutative, nor cocommutative)
universal object:

\begin{ex} Let $\h$ be the non-commutative 2-dimensional Lie algebra 
$\h=kx\oplus ky$ with antisymmetric bracket $[x,y]=x$.
Writing $C$ as the 4-dimensional coalgebra with basis $a,b,c,d$,
\[
\rho(x)=a\ot x+b\ot y,\ \
\rho(y)=c\ot x+d\ot y,\]
\[\Delta (a)=a\ot a+b\ot c,\ \Delta(b)=a\ot b+b\ot d\]
\[\Delta (c)=c\ot a+d\ot c,\ \Delta(d)=c\ot  b+d\ot d\]
The structure map on $x\ot y$ is computed
using the standard diagonal structure:
\[
\rho(x\ot y)=(a\ot x+b\ot y)(c\ot x+d\ot y)
=ac\ot (x\ot x)+ad\ot(x\ot y)+bc\ot (y\ot x)+bd\ot (y\ot y)
\]
The requirement  $(\id\ot [,])\rho(x\ot y)=\rho([x,y])$ gives
\[
ac\ot [x,x]+ad\ot [x, y]+bc\ot [y, x]+bd\ot [y,y]
=(ad-bc)\ot x
\]
\[
=\rho([x,y])=\rho(x)=a\ot x+b\ot y\]
that is, $$(ad-bc)=a, 0=b$$
or equivalently $b=0$ and $ad=a$. Before checking the other
 conditions
for colinearity using $x\ot y$, $y\ot x$ and $y\ot y$, we see that $b=0$
implies that $a$ is group-like. In the Hopf envelope $H(\h)$,
$a$ must be invertible, and $ad=a$  forces $d=1$. It is an easy
exercise that these conditions are enough to get  the bracket
colinear: the bialgebra freely generated by $a^{\pm 1}$
and $c$ with
\[
\Delta (a)=a\ot a, \Delta( c)=c\ot a+1\ot c\]
is the universal
Hopf algebra, coacting as
\[
\rho(x)=a\ot x\]
\[\rho(y)=c\ot x+1\ot y\]
If we don't require $d$ to be invertible, it i easy to check that the other conditions on the colinearity of the bracket
are $ad=da=a$, $cd=dc$. So the universal bialgebra is
\[
A(\h)=k\langle a,c,d\rangle/(ad=da=a,\ cd=dc)
\]
 with comultiplication
\[
\Delta(a)=a\ot a,\ \Delta( d)=d\ot d, \ \Delta(c)= c\ot a+d\ot c\]
and coaction
\[
\rho(x)=a\ot x,\  \rho(y)=c\ot x+d\ot y\]

\end{ex}

\begin{rem}
If $G$ is a group and $\h$ is a $G$-graded {\em Lie} algebra, because
of the antisymmetry of the bracket,  
one may assume that $G$ is abelian. But for {\em Leibniz} algebras,
grading over non-commutative groups makes perfect sense,
and grading over a non-commutative group $G$ is
the same as a coaction
over the non-commutative Hopf algebra $k[G]$.
\end{rem}


\begin{thebibliography}{99}

\bibitem[AGV]{Om}
{\sc A. Agore, A. Gordienko, and J. Vercruysee},
$V$-Universal Hopf Algebras (co)acting on $\Om$-algebras.	
\url{https://arxiv.org/abs/2005.12954}


\bibitem[AM]{AM}
{\sc A.  Agore, G. Militaru}, {\em 
A new invariant for finite dimensional Leibniz/Lie algebras},
\url{https://arxiv.org/abs/2006.00711}


\bibitem[BD]{BD} {\sc J. Bichon and  M. Dubois-Violette},
{\em The Quantum Group of a Preregular Multilinear Form},
Letters in Mathematical Physics,
April 2013, Volume 103, Issue 4, pp 455-468 .


\bibitem[CWW]{CWW} 
{\sc A. Chirvasitu}, {\sc C. Walton} and {\sc X. Wang},
{\em On quantum groups associated to a pair of preregular forms},
 to appear in J. Noncommutative Geometry. 

\bibitem[DV-L]{DVL} 
{\sc M. Dubois-Violette and G. Launer},
{\em The quantum group of a non-degenerate bilinear form}, 
Physics Letters B,
Volume 245, number 2 (1990) pp.  175-177.

\bibitem[ESS]{ESS} {\sc P. Etingof}, {\sc T. Schedler} and {\sc A. Soloviev},
\textit{Set-theoretical solutions to the quantum Yang-Baxter equation}, Duke Math. J. \textbf{100} (1999), no. 2, 169--209. 



\bibitem[FRT]{FRT}
{\sc L.D. Faddeev, N.Yu. Reshetikhin} and {\sc L.A. Takhtajan},
{\em Quantization of Lie groups and
Lie algebras}, Leningrad Math. J. 1 (1990) 193.




\bibitem[FG]{FG} 
{\sc M . Farinati and G. A. Garc\'ia},
 {\em
 Quantum function algebras from finite-dimensional Nichols algebras},
 ArXiv:math/1805.11736, to appear in JNCG.


\bibitem[F]{F}
{\sc G. Fiore}, 
\textit{Quantum groups $SO_{q}(N)$, $Sp_{q}(n)$ have $q$-determinants, too},  
J. Phys. A \textbf{27} (1994), no. 11, 3795--3802. 


\bibitem[Gu]{Gu}{\sc D. Gurevich}, 
\textit{Algebraic aspects of the Quantum Yang-Baxter Equation},
Leningrad J. Math. \textbf{2} (1991),  801--828 .

\bibitem[H]{H}{\sc T. Hayashi}, 
{\em Quantum groups and quantum determinants},
J. Alg. \textbf{152} (1992), 
146--165.

\bibitem[HWWW]{HWWW}
H. Huang, C. Walton, E. Wicks, R. Won,
{\em
Universal Quantum Semigroups}\\
\url{https://arxiv.org/abs/2008.00606}


\bibitem[KSW]{Woro}
{\sc P. Kasprzak, P. Soltan, S. Woronowicz}, 
{\em Quantum automorphism groups
 of finite quantum groups are classical},
Journal of Geometry and Physics
Volume 89 (2015) pp. 32-37.


\bibitem[M]{M} {\sc Y. Manin},
{\em Some remarks on Koszul algebras and
quantum groups},
Annales de l'institut Fourier, tome 37, n o 4 (1987), p. 191--205.

\bibitem[M2]{M2} {\sc Y. Manin},
{\em Quantum groups and noncommutative geometry}, 
Universit\'e de Montr\'eal, Centre de Recherches Math\'ematiques, Montreal, QC, 1988.

\bibitem[Po]{Po} 
{\sc  Todor Popov},
{\em Automorphisms of Regular Algebras},
https://arxiv.org/pdf/math/0601264.pdf


\bibitem[T]{T}
{\sc  M. Takeuchi},
{\em   Free Hopf algebras generated by coalgebras},
J. Math. Soc. Japan 23(1971), 561-582D

\bibitem[W]{W}  {\sc Shuzhou Wang}, {\em  Quantum symmetry groups of
 finite spaces.}
Comm. Math. Phys., 195(1):195–211, 1998.


\end{thebibliography}
\end{document}